\newcommand{\TheTitle}{Anderson-accelerated convergence of Picard iterations for 
incompressible Navier-Stokes equations}
\newcommand{\TheAuthors}{S. Pollock and L. G. Rebholz and M. Xiao}
\headers{\TheTitle}{\TheAuthors}
\title{{\TheTitle}\thanks{Submitted to the editors \today.
\funding{SP was supported in part by NSF DMS 1719849.
         LR was supported in part by NSF DMS 1522191.
         MX was supported in part by NSF DMS 1522191.}}}
\author{Sara Pollock
\thanks{Department of Mathematics,
         University of Florida,
         Gainesville, FL, 32611 (\email{s.pollock@ufl.edu}).}
\and
Leo G. Rebholz
\thanks{
Department of Mathematical Sciences,
Clemson University, 
Clemson, SC 29634 (\email{rebholz@clemson.edu}).}
\and
Mengying Xiao
\thanks{
Department of Mathematics,
College of William \& Mary, 
Williamsburg, VA 23185 (\email{mxiao01@wm.edu}).}
}
\newtheorem{remark}[theorem]{Remark}
\newtheorem{assumption}[theorem]{Assumption}
\newtheorem{alg}[theorem]{Algorithm}
\newcommand{\norm}[1]{\left\Vert#1\right\Vert}
\newcommand{\ns}[1]{\norm{#1}_*}
\newcommand{\Ome}{\Omega}
\newcommand{\nab}{\nabla}
\newcommand{\bv}{ v}
\newcommand{\bw}{ w}
\newcommand{\bX}{ X}
\newcommand{\bH}{ H}
\newcommand{\p}{\partial}
\newcommand{\bu}{ u}
\newcommand{\te}{\tilde e}
\newcommand{\tu}{\tilde u}
\newcommand{\ua}{u^\alpha}
\newcommand{\nr}[1]{\ensuremath{\left\|{#1} \right\|}}
\newcommand{\grad}{\nabla}
\newcommand{\goto}{\rightarrow}
\newcommand{\bigo}{{\mathcal O}}
\newcommand{\f}{\frac}
\newcommand{\argmin}[1]{\underset{#1}{\operatorname{arg}\,\operatorname{min}}\;}
\begin{document}

\maketitle

\begin{abstract}
We propose, analyze and test Anderson-accelerated Picard iterations for solving the 
incompressible Navier-Stokes equations (NSE).  Anderson acceleration has recently gained 
interest as a strategy to accelerate linear and nonlinear iterations, based on including 
an optimization step in each iteration.  We extend the Anderson-acceleration theory to 
the steady NSE setting and prove that the acceleration improves the convergence rate of 
the Picard iteration based on the success of the underlying optimization problem. 
The convergence is demonstrated in several numerical tests,
with particularly marked improvement in the higher Reynolds number regime.
Our tests show it can be an enabling technology in the sense that it can provide 
convergence when both usual Picard and Newton iterations fail.  
\end{abstract}

\begin{keywords}
Anderson acceleration, steady Navier-Stokes, fixed-point iteration, local convergence,
global convergence
\end{keywords}

\begin{AMS}
65N22, 65H10, 35Q30, 65N30
\end{AMS}

\section{Introduction}
We consider numerical solvers for the steady incompressible Navier-Stokes equations (NSE), 
which are given in a domain $\Omega\subset \mathbb{R}^d$  (d=2,3) by
\begin{eqnarray}
u \cdot\nabla u + \nabla p - \nu\Delta u  & = & f, \label{ns1} \\
\nabla \cdot u & = & 0, \label{ns2} \\
u|_{\partial\Omega} & = & g, \label{ns3} 
\end{eqnarray}
where $\nu$ is the kinematic viscosity, $f$ is a forcing, and $u$ and $p$ represent velocity and pressure.  For simplicity of our presentation and analysis, we consider homogeneous Dirichlet boundary
conditions, i.e. $g= 0$, but our theory can be extended to other common boundary conditions.

We study herein an acceleration technique applied to the Picard method for solving
the steady NSE.  The Picard method is commonly used for solving the steady NSE due to its stability
and global convergence properties, and takes the form (suppressing a spatial discretization)
\begin{eqnarray}
u_k \cdot\nabla u_{k+1} + \nabla p_{k+1} - \nu\Delta u_{k+1}  & = & f, \label{p1} \\
\nabla \cdot u_{k+1} & = & 0, \label{p2} \\
u_{k+1}|_{\partial\Omega} & = & 0, \label{p3} 
\end{eqnarray}
This iteration can be written as a fixed point iteration, $u_{k+1}=G(u_k)$, with $G$ denoting a solution operator for the Picard linearization \eqref{p1}-\eqref{p3}.

In practice, unfortunately, the Picard iteration often converges slowly, sometimes so 
slowly that for all practical purposes it fails.  
To improve this slow convergence, we employ an acceleration strategy introduced by D.G. 
Anderson in 1965 \cite{anderson65}.  In recent years, this strategy now
commonly referred to as Anderson acceleration has been analyzed in the context of 
multisecant methods for fixed-point iterations in \cite{FaSa09} motivated by a 
problem in electronic structure computations; and, in the context of generalized minimal residual (GMRES) methods in \cite{WaNi11}, where the efficacy of the method is 
demonstrated on a range of nonlinear problems.  
We further refer readers to \cite{K18,LWWY12,WaNi11} and the references
therein for detailed discussions on both practical implementation and 
a history of the method and its applications. Despite its long history of use, 
the first convergence analysis for Anderson acceleration 
(in both the linear and nonlinear settings) appears in 2015 in \cite{ToKe15}, 
under the usual local assumptions for convergence of Newton iterations.  
However, this theory (which we summarize in Section 2) does not prove 
that Anderson acceleration actually improves the convergence of a fixed point iteration.

The main contributions of this work involve Anderson acceleration applied to the Picard 
iteration for the steady NSE.  In this setting, we are able to prove that Anderson 
acceleration gives guaranteed improvement over the usual Picard iteration 
in a neighborhood of the fixed-point.  To our knowledge, this is the first
proof of improved convergence for Anderson acceleration applied to a nonlinear fixed 
point iteration, and thus may give insight into how a theory for general nonlinear 
fixed point operators might be developed.  Additionally, we show with several numerical 
experiments that Anderson acceleration can provide dramatic improvement in the Picard 
iteration, and can even be an enabling technology in the sense that it provides 
convergence in cases where both Picard and Newton fail.
In addition to this result, we also investigate the global convergence
behavior of Anderson acceleration for contractive operators. We find a  relation between 
the gain from the optimization, bounds on the optimization coefficients and the 
convergence rate of the underlying fixed-point iteration that assures the accelerated
sequence converges at an improved rate, independent of the initial error. 

This paper is arranged as follows.  
In \S \ref{sec:anderson} 
we provide some background on Anderson acceleration and its convergence
properties, and show global $r$-linear convergence at an improved rate based on success 
of the optimization problem for small enough coefficients.
In \S \ref{sec:PicardNSE} 
we give preliminaries for the steady NSE and associated finite element spatial 
discretization, and provide details of properties of the solution operator of the
fixed-point iteration associated with the discrete Picard linearization of the steady NSE.
In \S \ref{sec:AAP-NSE} 
we then analyze the Anderson accelerated Picard iteration for the steady NSE.  
We extend the general convergence results of \cite{K18,ToKe15} to this problem, and for 
the $m=1$ and $m=2$ cases, prove that Anderson acceleration improves the contraction 
ratio of the Picard iteration.  
In \S \ref{sec:numerics} 
 we report on results of several numerical tests for Anderson accelerated
Picard iterations for the steady NSE, and show that it can have a dramatic positive 
impact. 

\section{Anderson acceleration}\label{sec:anderson}
We discuss now the general Anderson acceleration algorithm and its 
convergence properties for contractive nonlinear operators.  In later
sections, we will consider the specific case of Picard iterations for the 
steady incompressible NSE.  
We start by stating the algorithm and reviewing the relevant known theory.
Theorem \ref{thm:reduc} is a new contribution to the theory for general nonlinear 
contractive operators. 
It shows that Anderson acceleration increases the convergence rate of the fixed-point
iteration when the optimization coefficients satisfy certain bounds.
We begin with the basic assumption of a contractive (nonlinear) operator.

\begin{assumption}\label{assume:G} 
Let $G: X \rightarrow X$ be a contractive operator with contraction ratio $r<1$, i.e.
\[
\| G(u)-G(w)\|_* \le r \| u-w\|_*, \quad \forall u,w \in X, 
\]
for a given space $X$ with norm $\| \cdot \|_*$ .
\end{assumption}
By standard fixed-point theory, under Assumption \ref{assume:G} there exists a unique 
$u^*\in X$ such that $G(u^*) = u^*$.  Although in \S 3 and beyond we will make specific
choices for $G$ and $X$, we discuss the acceleration algorithm in this form 
to emphasize its the more general applicability.

\begin{alg}[Anderson iteration] \label{alg:anderson}
The Anderson-acceleration with depth $m$ reads: \\ 
Step 0: Choose $u_0\in X.$\\
Step 1: Find $\tilde u_1\in X $ such that $\tilde u_1 = G(u_0)$.  
Set $u_1 = \tilde u_1$. \\
Step $k$: For $k+1=1,2,3,\ldots$ Set $m_k = \min\{ k, m\}.$\\
\indent [a.] Find $\tilde u_{k+1} = G(u_k)$. \\
\indent [b.] Solve the minimization problem for $\{ \alpha_{j}^{k+1}\}_{k-m_k}^k$
\[
\min_{\sum\limits_{j=k-m_k}^{k} \alpha_j^{k+1}  = 1} 
\left\|   \sum\limits_{j=k-m_k}^{k} \alpha_j^{k+1}( \tilde u_{j+1} - u_j) \right\|_* .
\] 
\indent [c.] Set  $u_{k+1} =  \sum\limits_{j= k-m_k}^k \alpha_j^{k+1} \tilde u_{j+1} $.
\end{alg}
\begin{remark}
For the more general Anderson mixing algorithm, set
$u_{k+1}$ in Algorithm \ref{alg:anderson} by 
$$u_{k+1} = \beta_{k+1} \sum\limits_{j= k-m_k}^k 
\alpha_j^{k+1} \tilde u_{j+1} + (1-\beta_{k+1}) 
\sum\limits_{j = k-m_k}^k \alpha_j^{k+1} u_j,$$  
for damping parameter $0 < \beta_{k} \le 1$.
Here we consider the undamped case $\beta_k=1$ for all $k$.
\end{remark}

The convergence of Anderson acceleration is studied in 
\cite{K18,ToKe15}, and for general nonlinear $G$ it is 
known that in a small enough neighborhood of the solution, the acceleration will 
not make the convergence significantly worse. To our knowledge however  
there is no mathematical proof that Anderson acceleration increases the convergence 
compared to the associated fixed point iteration.  The following result is proven in Theorem 2.3 in \cite{ToKe15}, and is the best known result for (locally) contractive 
operators.

\begin{theorem}[Convergence of Anderson acceleration] \label{thm:conv1} 
Assume operator $G$ has fixed-point $u^\ast$, and satisfies the following two conditions
under some norm $\ns{\,\cdot\,}$.
\begin{enumerate}
\item $G$ is Lipschitz continuously differentiable in a ball 
$\mathcal{B}(\rho) = \{ u\in X_h : \| u - u^\ast\|_* < \rho \}$ for some $\rho>0$,
\item There is a $c\in (0,1)$ such that for all $u,v\in \mathcal{B}(\rho)$, 
$\| G(u) - G(v) \|_\ast \le c \| u - v \|_\ast$.
\end{enumerate}
Then if $\sum_{j=1}^{m_k} |\alpha_j^k|$ is uniformly bounded for all $k>0$,  
Algorithm \ref{alg:anderson} converges to $u^*$ with contraction ratio $\hat c $ where 
$c< \hat c <1$, provided $\norm{u_0-u}_\ast$ is small enough.
\end{theorem}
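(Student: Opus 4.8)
The plan is to track the nonlinear residual $F(u) := G(u) - u$ rather than the error $e_k := u_k - u^\ast$ directly, since the optimization step in Algorithm~\ref{alg:anderson} acts precisely on the residual differences $\tilde u_{j+1} - u_j = F(u_j)$. First I would record the elementary equivalence of the two quantities: since $u^\ast = G(u^\ast)$, writing $F(u_k) = (G(u_k)-G(u^\ast)) - e_k$ and using the contraction hypothesis gives $(1-c)\ns{e_k} \le \ns{F(u_k)} \le (1+c)\ns{e_k}$. Hence it suffices to prove that $\ns{F(u_k)} \to 0$ $r$-linearly, and the rate for $e_k$ follows. The target of the main estimate is a one-step inequality of the form $\ns{F(u_{k+1})} \le c\,\ns{F(u_k)} + (\text{higher-order terms})$.

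The heart of the argument is an algebraic identity exploiting that the optimization coefficients form an \emph{affine} combination. Set $\bar u := \sum_{j} \alpha_j^{k+1} u_j$, so that $u_{k+1} = \sum_j \alpha_j^{k+1} G(u_j)$ with $\sum_j \alpha_j^{k+1}=1$. Expanding $G$ to first order about $\bar u$ and using $\sum_j \alpha_j^{k+1}(u_j - \bar u)=0$, the linear term cancels and one obtains $u_{k+1} - G(\bar u) = \bigo\!\big(\sum_j |\alpha_j^{k+1}|\,\ns{u_j - \bar u}^2\big)$; that is, $u_{k+1}$ agrees with $G(\bar u)$ to second order. Consequently $F(u_{k+1}) = F(G(\bar u)) + (\text{h.o.t.})$, and since $F(G(v)) = G(G(v))-G(v)$, applying the contraction property to the pair $G(\bar u),\bar u$ yields $\ns{F(G(\bar u))} \le c\,\ns{F(\bar u)}$. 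The same second-order identity shows $F(\bar u) = G(\bar u) - \bar u = \sum_j \alpha_j^{k+1} F(u_j) + (\text{h.o.t.})$, whose leading term is exactly the quantity minimized in step~[b.]. By feasibility of the trivial choice $\alpha_k^{k+1}=1$ (others zero), the optimal objective is bounded by $\ns{F(u_k)}$, and chaining the three relations gives $\ns{F(u_{k+1})} \le c\,\ns{F(u_k)} + (\text{h.o.t.})$.

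It remains to close this into a genuine $r$-linear bound, and this is where I expect the real work to lie. The higher-order terms must be shown to be controlled by $\rho$ (or by the current error), so that the effective rate is $\hat c = c + \bigo(\rho)$, which is strictly below $1$ once $\rho$ is small enough; this is precisely the source of the gap $c < \hat c < 1$. Two ingredients are essential: the uniform bound $\sum_j |\alpha_j^k| \le M$, which controls both the combination $\bar u$ and the quadratic remainders via $\sum_j|\alpha_j^{k+1}|\,\ns{u_j-\bar u}^2 \lesssim M^3 (\max_j \ns{e_j})^2$; and an induction on $k$ guaranteeing that every iterate stays in $\mathcal{B}(\rho)$. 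The main obstacle is the simultaneous bookkeeping: one must run the induction so that ball-invariance and residual decay reinforce one another, choosing $\ns{u_0 - u^\ast}$ small enough (depending on $c$, $M$, $\rho$, and the Lipschitz constant of $G'$) that the accumulated higher-order terms never overwhelm the factor-$c$ contraction. Converting the resulting residual decay back through $(1-c)\ns{e_k}\le\ns{F(u_k)}$ then yields convergence of $u_k \to u^\ast$ with contraction ratio $\hat c$.
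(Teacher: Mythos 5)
The paper does not prove this theorem itself; it is quoted from Toth and Kelley \cite{ToKe15}, and your outline is essentially their argument: track the residual $F(u)=G(u)-u$, use the affine constraint $\sum_j\alpha_j^{k+1}=1$ together with the Lipschitz continuity of $G'$ to cancel the first-order term so that $u_{k+1}$ agrees with $G\big(\sum_j\alpha_j^{k+1}u_j\big)$ up to quadratic remainders, bound the optimization objective by $\ns{F(u_k)}$ via feasibility of the trivial coefficient choice, and close with an induction keeping all iterates (and the affine combination, via the uniform bound on $\sum_j|\alpha_j^k|$) inside $\mathcal{B}(\rho)$. Your sketch is correct in all essentials and takes the same route as the cited proof; the bookkeeping you defer --- ball invariance and absorption of the quadratic terms into the gap $\hat c - c$ --- is precisely the content of the induction in \cite{ToKe15}.
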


We improve on this result for steady NSE in \S \ref{sec:AAP-NSE} 
where we show for the contractive operator $G$ 
associated with the Picard iteration that the convergence of the 
residual to zero is guaranteed to be accelerated close enough to the solution.  
While this result depends on the particular structure of the steady NSE and cannot be 
immediately applied to general contractive operators, the tools we employ may
give insight into how a more general result of improved convergence rate
can be constructed. 

Under some stronger assumptions on the coefficients $\alpha$ of the minimization
step,  we next establish a globally accelerated rate of convergence of the error
for general contractive operators. 
The idea of this analysis is to characterize the improvement
in the convergence rate by the balance between the success of the optimization problem 
solved at each step and the magnitude of the coefficients corresponding to earlier
solutions. The common link between the analysis here and in \S \ref{sec:AAP-NSE} is in characterizing the improvement
in convergence rate by the gain from the optimization problem.
We now fix some notation used in the remainder of the article.
\begin{align}\label{eqn:notation}
e_k \coloneqq u_k - u_{k-1},
\quad \te_k \coloneqq \tu_k - \tu_{k-1}, 
\quad w_k \coloneqq G(u_k) - u_k.
\end{align}

To aid in the analysis here and in \S \ref{sec:AAP-NSE}
we introduce an intermediate quantity 
\begin{align}\label{eqn:ua_k}
\ua_k = \sum_{j = k-m_k}^k \alpha^{k+1}_j u_j.
\end{align}
In particular, $\ua_k$ satisfies 
$\| u_{k+1} - \ua_k \|_* = \theta_k \nr{\tu_{k+1}- u_k}_*$,  
where $0 < \theta_k \le 1$ denotes the gain of the optimization of Step $k [b.]$ by
\begin{align}\label{eqn:n3-002}
\min\limits_{\sum\limits_{j=k-m_k}^k\alpha^{k+1}_j = 1}  
\nr{\sum_{j = k-m_k}^k \alpha_j^{k+1}(\tilde u_{j+1} - u_j)}_*
= \theta_k \nr{\tu_{k+1}- u_k}_*.
\end{align}
As $\theta_k = 1$ corresponds to the original fixed-point iteration, 
it is expected that $\theta_k<1$ for all $k$.

\begin{theorem}\label{thm:reduc}
Let the sequences $\{u_k\}$ and $\{\tu_k\}$ be given by Algorithm \ref{alg:anderson}.  
Let $G$ satisfy Assumption \ref{assume:G}.
Suppose the first $m_{k}$ coefficients of each $\alpha_j^{k+1}$ satisfy
$\left| \sum\limits_{j = k-m_k}^{l} \alpha_j^{k+1}\right| \le \eta$, 
$l = k-m_k, \ldots, k-1$, for some $0 < \eta < 1$. 
Define $e_k$ as in \eqref{eqn:notation}.
Then $\ns{e_2} \le (\kappa \theta_1 + \eta) \ns{e_1}$ and 
it holds for $2 \le k \le m$ that
\begin{align}\label{eqn:n3-007}
\nr{e_{k+1}}_* 
\le (r \theta_k + \eta) \nr{e_k}_*
  + \eta(r \theta_k +1) \sum_{j = 1}^{k-1} \ns{e_{j}}. 
\end{align}
For $k > m$, ($m_{k-1} = m_k =m$) it holds that
\begin{align}\label{eqn:n3-008}
\nr{e_{k+1}}_* 
 \le (r \theta_k + \eta)\nr{e_k}_* 
  + \eta(r \theta_k +1) \sum_{j = k-m+1}^{k-1} \ns{e_{j}}
  + r \theta_k \eta \ns{e_{k-m}},
\end{align}
where the sums are understood to be zero if the final index is less that the starting
index. 
\end{theorem}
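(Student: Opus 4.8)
The plan is to control $e_{k+1}$ by passing through the intermediate iterate $\ua_k$ and bounding the two resulting pieces by the partial sums that the hypothesis constrains. First I would use $\sum_{j=k-m_k}^k \alpha_j^{k+1}=1$ together with $\tu_{j+1}-u_j = w_j = G(u_j)-u_j$ to write
\[
e_{k+1} = (u_{k+1}-\ua_k) + (\ua_k - u_k) = \sum_{j=k-m_k}^k \alpha_j^{k+1} w_j + (\ua_k - u_k).
\]
By the triangle inequality and the definition of the optimization gain \eqref{eqn:n3-002}, noting $\tu_{k+1}-u_k = w_k$, this gives $\ns{e_{k+1}} \le \theta_k \ns{w_k} + \ns{\ua_k - u_k}$. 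The two remaining tasks are to expand $\ua_k - u_k$ and $w_k$ as combinations of increments whose coefficients are exactly the partial sums bounded in the hypothesis.

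For the first piece I would again invoke $\sum_j \alpha_j^{k+1}=1$ to write $\ua_k - u_k = \sum_{j=k-m_k}^k \alpha_j^{k+1}(u_j - u_k)$ and telescope $u_j - u_k = -\sum_{i=j+1}^k e_i$. Exchanging the order of summation, the coefficient of each $e_i$ becomes the partial sum $\sum_{j=k-m_k}^{i-1}\alpha_j^{k+1}$ for $i = k-m_k+1,\dots,k$; since each such sum ends at an index $l = i-1 \le k-1$, the hypothesis bounds it by $\eta$, yielding $\ns{\ua_k-u_k}\le \eta\sum_{i=k-m_k+1}^k \ns{e_i}$.

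For $w_k$ (with $k\ge 2$) the key idea is to substitute the previous optimization step $u_k = \sum_{j=(k-1)-m_{k-1}}^{k-1}\alpha_j^k \tu_{j+1}$ and, using that these coefficients also sum to one, write $w_k = \tu_{k+1}-u_k = \sum_j \alpha_j^k(\tu_{k+1}-\tu_{j+1})$, then telescope $\tu_{k+1}-\tu_{j+1}=\sum_{i=j+2}^{k+1}\te_i$. Exchanging summation order, the coefficient of $\te_i$ is $\sum_{j=(k-1)-m_{k-1}}^{i-2}\alpha_j^k$. The subtle point — and what I expect to be the main obstacle — is to separate the top term $i=k+1$, whose coefficient is the \emph{full} sum and equals $1$, from the terms $i\le k$, whose coefficients are genuine partial sums (ending at $l=i-2\le k-2$) and hence bounded by $\eta$ via the hypothesis applied at iteration $k-1$. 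Contractivity enters only here, through $\ns{\te_i} = \ns{G(u_{i-1})-G(u_{i-2})}\le r\ns{e_{i-1}}$, and after reindexing produces $\ns{w_k}\le r\ns{e_k} + \eta r\sum_{i=k-m_{k-1}}^{k-1}\ns{e_i}$.

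Finally I would insert both bounds into $\ns{e_{k+1}}\le \theta_k\ns{w_k}+\ns{\ua_k-u_k}$ and merge the overlapping sums: the two $\ns{e_k}$ contributions combine to $(r\theta_k+\eta)\ns{e_k}$, every index appearing in both sums acquires the coefficient $\eta(r\theta_k+1)$, and the lone index occurring only in the $w_k$-sum contributes $r\theta_k\eta\ns{e_{k-m}}$. Specializing the ranges completes the argument: for $2\le k\le m$ one has $m_k=k$ and $m_{k-1}=k-1$, so both sums begin at $i=1$ and the offset term disappears, giving \eqref{eqn:n3-007}; for $k>m$ both depths equal $m$ and the term $i=k-m$ survives, giving \eqref{eqn:n3-008}. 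The base case $k=1$ is immediate, since $u_1=\tu_1=G(u_0)$ forces $w_1 = G(u_1)-G(u_0)=\te_2$ with no residual sum, whence $\ns{w_1}\le r\ns{e_1}$ and $\ns{e_2}\le(r\theta_1+\eta)\ns{e_1}$.
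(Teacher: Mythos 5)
Your proposal is correct and follows essentially the same route as the paper's proof: the same decomposition of $e_{k+1}$ through the intermediate quantity $\ua_k$, the same telescoping of $\ua_k - u_k$ and of $w_k$ (the latter via the sum-to-one property of the previous step's coefficients) so that the partial-sum hypothesis bounds every coefficient except the full sum attached to $\te_{k+1}$, and contractivity applied only to the increments $\te_i$. The bookkeeping of the index ranges for $k\le m$ versus $k>m$ and the base case also match the paper (which writes $\kappa$ in place of $r$ in the $k=1$ estimate, evidently a typo that your version corrects).
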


The above theorem shows that if $\eta$ is small (requiring $\{\alpha_k^{k+1}\}$ 
close to 1), then Algorithm \ref{alg:anderson} can speed up convergence.  
The precise relationship between $r, \theta$ and $\eta$ to assure
$r$-linear convergence at a rate greater than $r$ is given in the corollary that follows.
This estimate also suggests one of they ways the accelerated algorithm can stall by 
failing to increase or even maintain the standard fixed-point 
convergence rate if coefficients $\alpha^{k+1}_j, ~j \le k-1,$ 
corresponding to iterates earlier in the history are too large. 

\begin{proof}
The proof makes use of the decomposition
\begin{align}\label{eqn:n3-001}
\nr{u_{k+1} - u_k}_* \le \nr{u_{k+1} - \ua_k}_* + \nr{ \ua_k - u_k}_*.
\end{align}
Expanding $u_k$ as a linear combination of $G(u_j)$, $j = k-1-m_{k-1}, \ldots, k-1$,
using the property that the coefficients of $\alpha_j^{k}$ sum to unity and telescoping
the resulting difference, we have
\begin{align}\label{eqn:n3-003}
\nr{G(u_k) - u_k}_* & = 
\nr{\sum_{j = k-1 - m_{k-1}}^{k-1} \alpha^k_j(G(u_k) - G(u_j)) }_*
\nonumber \\
& = \nr{ \sum_{j = k-m_{k-1}}^{k}  \left( \sum_{n=k-m_{k-1}-1}^{j-1} \alpha_n^k\right)
         \left(G(u_{j}) - G(u_{j-1}) \right)} 
\nonumber \\
& \le  \nr{G(u_k) - G(u_{k-1})} _*
 + \eta \sum_{j = k-m_{k-1}}^{k-1} \nr{G(u_{j}) - G(u_{j-1}) }_*
\nonumber\\
& \le  r \left( \nr{e_k}_* 
 + \eta \sum_{j = k-m_{k-1}}^{k-1} \nr{e_{j} }_* \right),
\end{align}
where the last inequality follows from the Lipschitz property of $G$.
By the same reasoning as above
\begin{align}\label{eqn:n3-005} 
\nr{\ua_k - u_k}_* 
 =  \nr{\sum_{j = k-m_k+1}^{k} \left( \sum_{n = k-m_k}^{j-1}\alpha_n^{k+1} \right) e_j}_*
 \le  \eta \sum_{j = k-m_k+1}^{k} \nr {e_j}_*.
\end{align}
Putting \eqref{eqn:n3-002}, \eqref{eqn:n3-003} and \eqref{eqn:n3-005} together into
\eqref{eqn:n3-001}
establishes the result.
\end{proof}

Theorem \ref{thm:reduc} gives an essential worst-case scenario where no cancellation
between the iterates is accounted for. Nonetheless, for a given bound $\eta$ 
we can determine sufficient optimization gain $\theta$ to ensure $r$-linear 
convergence $\nr{e_{k+1}}_* \le r^k\nr{e_1}_*$ where $r$ is the convergence rate of 
the underlying fixed-point iteration. A similar formula can be derived for 
$r$-linear convergence at a given rate $q$.
\begin{corollary}\label{cor:qfac}
Let the sequence $\{u_k\}$ be given by Algorithm \ref{alg:anderson} and 
suppose the hypotheses of Theorem \ref{thm:reduc} hold true.
Then $r$-linear convergence with factor $r$ holds for $k \ge 1$
\begin{align}\label{eqn:n3c1-00}
\nr{u_{k+1} - u_k}_* &\le r^k \nr{u_1 - u_0}_*,
\end{align}
if it holds that $\theta_1< 1-\eta/r$ and,
\begin{align}\label{eqn:n3-009}
\theta_k \le 
\left\{ \begin{array}{cc}
    \left(\f{r^{k} - \eta (1-r^{k})/(1-r)  }
                {r^{k} + \eta (r-r^{k})/(1-r)  }  \right), & k \le m 
\\
    \left(\f{r^{m} - \eta (1-r^{m})/(1-r)  }
            {r^{m} + \eta (1-r^{m})/(1-r)  }  \right), & k > m,
\end{array} \right.
\end{align}
and $\eta < r^m(1-r)/(1-r^m)$.
\end{corollary}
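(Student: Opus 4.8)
The plan is to establish \eqref{eqn:n3c1-00}, equivalently $\ns{e_{k+1}} \le r^k \ns{e_1}$, by induction on $k$ using the recursions furnished by Theorem \ref{thm:reduc}. For the base case $k=1$, the theorem supplies $\ns{e_2} \le (r\theta_1 + \eta)\ns{e_1}$, so the hypothesis $\theta_1 < 1-\eta/r$ gives $r\theta_1 + \eta < r$ at once, whence $\ns{e_2} \le r\ns{e_1}$. The inductive hypothesis I would carry is $\ns{e_j} \le r^{\,j-1}\ns{e_1}$ for all $j \le k$, and the goal at step $k$ is to promote this to $j=k+1$.

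In the regime $2 \le k \le m$, I would substitute the inductive hypothesis into \eqref{eqn:n3-007} and evaluate the finite geometric sum through $\sum_{j=1}^{k-1} r^{\,j-1} = (1-r^{k-1})/(1-r) =: S$. This reduces the target $\ns{e_{k+1}} \le r^k\ns{e_1}$ to a single linear inequality in $\theta_k$, whose solution is $\theta_k \le (r^k - \eta r^{k-1} - \eta S)/(r^k + \eta r S)$. The one nonroutine ingredient is the telescoping identity $r^{k-1} + (1-r^{k-1})/(1-r) = (1-r^k)/(1-r)$, which collapses the numerator to $r^k - \eta(1-r^k)/(1-r)$; the denominator simplifies directly via $\eta r S = \eta(r-r^k)/(1-r)$. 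Together these reproduce exactly the first branch of \eqref{eqn:n3-009}.

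For $k > m$ I would argue identically, now inserting the inductive hypothesis into \eqref{eqn:n3-008}. Here the truncated sum contributes $\sum_{j=k-m+1}^{k-1} r^{\,j-1} = r^{k-m}(1-r^{m-1})/(1-r)$ and the memory term contributes $r\theta_k\eta\,r^{k-m-1}$. I expect the only delicate bookkeeping to be isolating the common factor $r^{k-m}$ from both the numerator and the denominator of the resulting threshold for $\theta_k$, so that the explicit $k$-dependence cancels; once this is done, the same identity with exponent $m$ collapses the bound to $(r^m - \eta(1-r^m)/(1-r))/(r^m + \eta(1-r^m)/(1-r))$, the second branch of \eqref{eqn:n3-009}.

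Finally, I would record the role of the bound on $\eta$. Each threshold for $\theta_k$ is positive, hence admits a feasible gain $\theta_k \in (0,1]$, precisely when its numerator is positive: $\eta < r^k(1-r)/(1-r^k)$ for $k \le m$ and $\eta < r^m(1-r)/(1-r^m)$ for $k > m$. Since $r^k(1-r)/(1-r^k)$ is decreasing in $k$ (the map $x \mapsto x/(1-x)$ is increasing and $r^k$ decreases), the single hypothesis $\eta < r^m(1-r)/(1-r^m)$ secures positivity for every $k$, which closes the induction and yields \eqref{eqn:n3c1-00}.
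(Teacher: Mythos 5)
Your induction is correct and is precisely the argument the paper has in mind: the paper states only that the corollary "follows directly from the result of Theorem \ref{thm:reduc} by induction on $k$, first for $k \le m$, then for $k > m$," and your base case, your use of \eqref{eqn:n3-007} with the geometric sum $(1-r^{k-1})/(1-r)$, your cancellation of the common factor $r^{k-m}$ in the $k>m$ regime via \eqref{eqn:n3-008}, and your reading of the $\eta$-condition as guaranteeing positivity of the thresholds all check out. No discrepancy with the paper's intended proof.
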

For instance, with $r = 0.9$ and $\eta = 0.1$, we have for the $m=1$ case
$\nr{e_{k+1}}_\ast \le r^k \nr{e_1}_\ast$ for $\theta_1 = 8/9$ and 
$\theta_k \le (r - \eta)/(r + \eta) = 0.8, k > 1$.
For $m=2$ we require $\theta_k \le 0.62$ for $k > 2$.
The proof follows directly from the result of Theorem \ref{thm:reduc} by induction on 
$k$, first for $k \le m$, then for $k > m$, and is left to the interested reader.

The relevance of this result is that it quantifies a relation between the parameters of
the optimization and the contractive operator for which global convergence at a given 
rate will be observed. In contrast, the results in section \S \ref{sec:AAP-NSE} 
and those in \cite{K18,ToKe15} prove an accelerated rate of convergence 
only once the residual is small enough.
Corollary \ref{cor:qfac} encompasses the preasymptotic regime,
describing the global convergence seen in \S \ref{sec:numerics}; and, is consistent with
results of  \cite{LWWY12} for finite difference approximations to
Richard's equation in which a lack of significant dependence on choice of initial iterate
is demonstrated numerically.

\section{The Picard iteration for steady NSE}\label{sec:PicardNSE}
We next consider the steady incompressible NSE.  
First, we give the mathematical framework and define some notation including the Picard 
iteration and associated Picard solution operator.
Then we prove two important properties for the solution operator in order to relate
it to the developed convergence theory.

\subsection{Mathematical preliminaries}\label{subsec:math-prelim}
We consider an open connected domain $\Ome\subset \mathbb{R}^d\ (d=2,3)$ 
with Lipschitz boundary $\p\Ome$.
The $L^2(\Ome)$ norm and inner product will be denoted by $\|\cdot\|$ 
and $(\cdot,\cdot)$, and $L^2_0(\Omega)$ denotes the zero mean subspace of 
$L^2(\Omega)$.  Throughout this paper, it is understood by context whether a particular 
space is scalar or vector valued, and we do not distinguish notation.

For the natural NSE velocity and pressure spaces, we denote
$\bX \coloneqq \bH^1_0(\Omega)$ and $Q \coloneqq L^2_0(\Omega)$.
In the space $X$, the Poincare inequality is known to hold: 
There exists $\lambda>0$, dependent only on $| \Omega| $, such that for every $v\in X$,
$\| v\| \le \lambda \| \nabla v\|.$
The dual space of $X$ will be denoted by $X'$, with norm $\| \cdot \|_{-1}$.  We use the notation $\langle \cdot, \cdot \rangle$ to denote the dual pairing of functions in $X$ and $X'$.

Define the skew-symmetric, trilinear operator $b^{*}:\bX\times \bX \times \bX \rightarrow \mathbb{R}$   by
\[
b^{*}(\bu,\bv,\bw) \coloneqq \frac12 (\bu\cdot\nabla \bv,\bw) - \frac12 (\bu \cdot\nabla \bw,\bv),
\]
and recall, from e.g. \cite{GR86}, that there exists $M$ depending only on $\Omega$ such that
\begin{equation}
| b^{*}(\bu,\bv,\bw) | \le M \| \nabla u \| \| \nabla v \| \| \nabla w\|, \label{eqn:bbound}
\end{equation}
for every $u,v,w\in X$.

Let $\tau_h$ be a  conforming, shape-regular, and simplicial triangulation of $\Ome$ with
maximum element diameter $h$.  Denote by $P_k$ the space of degree $k$ globally continuous piecewise polynomials with respect to $\tau_h$, and $P_k^{disc}$ the space of degree $k$ piecewise polynomials on $\tau_h$ that can be discontinuous across elements.

Throughout the paper, we consider only discrete velocity-pressure spaces $(\bX_h,\ Q_h)\subset(X,Q)$ that satisfy the LBB condition: there exists a constant $\beta$, independent of $h$, satisfying
\begin{align*}
\inf_{q\in Q_h} \sup_{\bv\in \bX_h} \frac{(\nab \cdot \bv,q)}{\|q\| \|\nab \bv\| }\ge \beta>0.
\end{align*}
Common examples of such elements include $(P_2,P_1)$ Taylor-Hood elements, and 
divergence-free $(P_k,P_{k-1}^{disc})$ Scott-Vogelius (SV) elements on meshes with 
particular structure \cite{arnold:qin:scott:vogelius:2D,zhang:scott:vogelius:3D}, 
and see \cite{FalkNeilan13,GuzmanNeilan13} for other stable and divergence-free elements.  
We denote the discretely divergence free 
velocity space by
 \[
V_h \coloneqq \{ v \in X_h,\ (\nabla \cdot v,q)=0 \ \forall q\in Q_h \}.
\]

\subsection{Discrete Navier-Stokes equations}\label{subsec:dNSE}
We can now state the discrete steady NSE problem as follows:  
Find $(u,p)\in (X_h,Q_h)$ satisfying for all $(v,q)\in (X_h,Q_h)$,
\begin{eqnarray}
b^*(u,u,v) - (p,\nabla \cdot v) + \nu(\nabla u,\nabla v) 
& = & \langle f,v \rangle, \label{eqn:dnse1} \\
(\nabla \cdot u,q) & = & 0. \label{eqn:dnse2}
\end{eqnarray}
As shown in \cite{GR86,Laytonbook,temam}, solutions to
\eqref{eqn:dnse1}-\eqref{eqn:dnse2} exist and satisfy \begin{equation}
\label{dsolnbd}
\| \nabla u \| \le \nu^{-1} \| f \|_{-1}.\end{equation}
Define the data-dependent constant $\kappa := M\nu^{-2} \| f \|_{-1}$.  
If the data satisfy the condition $\kappa<1$, then 
the system \eqref{eqn:dnse1}-\eqref{eqn:dnse2} is well-posed with a unique solution pair 
$(u,p)$ \cite{GR86}.
We will assume throughout this paper that  $\kappa<1$, and refer to this as the 
{\it small data condition}. 

It will be notationally convenient to also consider the $V_h$ formulation of 
\eqref{eqn:dnse1}-\eqref{eqn:dnse2}: Find $u \in V_h$ satisfying for all $v\in V_h$
\begin{equation}\label{eqn:dnse3}
b^*(u,u,v)+ \nu(\nabla u,\nabla v)   =  \langle f,v \rangle.
\end{equation} 
The equivalence of \eqref{eqn:dnse3} to \eqref{eqn:dnse1}-\eqref{eqn:dnse2} 
follows from the inf-sup condition \cite{Laytonbook}.

\begin{remark}
The accuracy of the discrete solution can be improved with the use of 
grad-div stabilization in the discrete NSE system, i.e. by
adding $\gamma (\nabla \cdot u,\nabla \cdot v)$ to the momentum equation with 
$\gamma>0$ \cite{JJLR13,OR04}.  To simplify the presentation, we omit this important term, as all the analysis to follow will hold if grad-div is added to the system.
\end{remark}

The Picard iteration, stated as follows, is a common approach to solving
\eqref{eqn:dnse1}-\eqref{eqn:dnse2}.
\begin{alg}[Picard iteration for steady NSE]\label{alg:usualPicard} \ \\
Step 1: Choose $u_0\in X_h.$\\
Step $k$: Find $(u_k,p_k)\in (X_h,Q_h)$ satisfying for all $(v,q)\in (X_h,Q_h)$,
\begin{eqnarray} \label{eqn:pnse1}
b^*(u_{k-1},u_k,v) - (p_k,\nabla \cdot v) + \nu(\nabla u_k,\nabla v)  & = &  \langle f,v \rangle, \\
(\nabla \cdot u_k,q) & = & 0. \label{eqn:pnse2}
\end{eqnarray}
\end{alg}

This algorithm converges with contraction ratio $\kappa$ for 
{any} initial guess, provided $\kappa<1$ (see \cite{GR86} for a standard proof).
We note that the equivalent $V_h$ formulation of Step $k$ of the Picard iteration can be 
written as:  Find $u_k \in V_h$ satisfying for all $v\in V_h$
\begin{equation}\label{eqn:pnse3}
b^*(u_{k-1},u_k,v)+ \nu(\nabla u_k,\nabla v)  =   \langle f,v \rangle.
\end{equation} 

\subsection{Properties of the Picard solution operator for steady NSE}
\label{subsec:prop-Pic-NSE}
In order to analyze the effect of Anderson acceleration on the steady NSE Picard 
iteration, we next define a solution operator for the Picard linearization of the NSE
from \eqref{eqn:pnse3}.

\begin{definition}\label{def:picard-op}
Define the Picard solution operator $ G: V_h \rightarrow V_h$ as follows.  Given $w\in V_h$, $G(w)\in V_h$ satisfies 
\begin{equation} \label{eqn:Gop}
b^*(w,G(w),v)  + \nu(\nabla G(w),\nabla v)   =   \langle f,v \rangle \ \forall v\in V_h.
\end{equation}
\end{definition}
By this definition of $G$, Step $k$ of the Picard iteration \eqref{eqn:pnse3} 
for the steady NSE can be written simply as: set $u_{k} = G(u_{k-1})$.
The problem \eqref{eqn:Gop} is linear, and since $f\in X'$ is assumed, Lax-Milgram theory 
can easily be applied to show
that \eqref{eqn:Gop} is well-posed and thus that the solution operator $G$ is well-defined.  
By taking $v=G(w)$, the trilinear term vanishes, leaving
$
\nu \| \nabla G(w) \|^2  =  \langle f,G(w) \rangle \le \| f\|_{-1} \| \nabla G(w) \|,
$
and thus we have that for any $w\in V_h$, 
\begin{equation}
\| \nabla G(w) \| \le \nu^{-1} \| f \|_{-1}. \label{eqn:Gbound}
\end{equation}

We now prove that $G$ is Lipschitz continuously (Frechet) differentiable, and a 
contractive operator with contraction ratio $\kappa$.

\begin{lemma}\label{lem:Gprop}
The operator $G$ is Lipschitz continuously (Frechet) differentiable, and for any 
$w\in V_h$ satisfies $\| \nabla G'(w) \| \le \kappa$.
\end{lemma}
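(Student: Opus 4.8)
The plan is to identify the candidate derivative by formally linearizing the defining relation \eqref{eqn:Gop}, verify Fr\'echet differentiability by controlling the remainder, establish the operator bound $\|\nabla G'(w)\| \le \kappa$, and finally show $w \mapsto G'(w)$ is Lipschitz. The two workhorses throughout will be the skew-symmetry $b^*(u,v,v) = 0$ (immediate from the definition of $b^*$) and the trilinear bound \eqref{eqn:bbound}, together with the a priori estimate \eqref{eqn:Gbound}. To obtain the candidate, write $u = G(w)$ and subtract \eqref{eqn:Gop} written at $w$ from the same equation at $w+h$; setting $\delta = G(w+h) - G(w)$ and using linearity of $b^*$ in each slot gives $b^*(w, \delta, v) + \nu(\nabla\delta, \nabla v) = -b^*(h, u, v) - b^*(h, \delta, v)$ for all $v \in V_h$. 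Dropping the quadratic term $b^*(h,\delta,v)$ suggests defining $G'(w)h =: \delta_0 \in V_h$ as the solution of $b^*(w, \delta_0, v) + \nu(\nabla\delta_0, \nabla v) = -b^*(h, G(w), v)$; this problem is well-posed by Lax--Milgram and is linear and bounded in $h$, so it is the correct candidate.

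The key estimate is a bound on the full increment $\delta$: testing the $\delta$-equation with $v = \delta$ annihilates both $b^*(w,\delta,\delta)$ and $b^*(h,\delta,\delta)$ by skew-symmetry, leaving $\nu\|\nabla\delta\|^2 = -b^*(h,u,\delta) \le M\|\nabla h\|\,\|\nabla u\|\,\|\nabla\delta\|$, so that $\|\nabla\delta\| \le \kappa\|\nabla h\|$ after inserting \eqref{eqn:Gbound}. The identical computation applied to $\delta_0$ yields $\|\nabla\delta_0\| \le \kappa\|\nabla h\|$, which is precisely the claimed bound $\|\nabla G'(w)\| \le \kappa$. For Fr\'echet differentiability I subtract the $\delta$- and $\delta_0$-equations: the remainder $\rho = \delta - \delta_0$ solves $b^*(w,\rho,v) + \nu(\nabla\rho,\nabla v) = -b^*(h,\delta,v)$, and testing with $v = \rho$ gives $\|\nabla\rho\| \le \nu^{-1}M\|\nabla h\|\,\|\nabla\delta\| \le \nu^{-1}M\kappa\|\nabla h\|^2$. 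Since $\|\nabla\rho\|/\|\nabla h\| = O(\|\nabla h\|) \to 0$, the operator $G$ is Fr\'echet differentiable with derivative $G'(w)$.

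For Lipschitz continuity of $G'$, fix $h$ and let $\delta_i = G'(w_i)h$ for $i = 1,2$. Subtracting the two defining equations and writing $\sigma = \delta_1 - \delta_2$, the leading term splits as $b^*(w_1,\delta_1,v) - b^*(w_2,\delta_2,v) = b^*(w_1,\sigma,v) + b^*(w_1 - w_2,\delta_2,v)$, while the right-hand side contributes $-b^*(h, G(w_1) - G(w_2), v)$. Testing with $v = \sigma$, using skew-symmetry to kill $b^*(w_1,\sigma,\sigma)$, and bounding each trilinear term with \eqref{eqn:bbound} together with $\|\nabla\delta_2\| \le \kappa\|\nabla h\|$ and the contraction bound $\|\nabla(G(w_1) - G(w_2))\| \le \kappa\|\nabla(w_1 - w_2)\|$ (itself a consequence of $\|\nabla G'\| \le \kappa$ and the mean value inequality on the convex set $V_h$), I obtain $\|\nabla\sigma\| \le 2\nu^{-1}M\kappa\,\|\nabla(w_1 - w_2)\|\,\|\nabla h\|$, a global Lipschitz constant $2\nu^{-1}M\kappa$ for $G'$.

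I expect the main obstacle to be the Fr\'echet (rather than merely G\^ateaux) differentiability step: one must first secure the increment bound $\|\nabla\delta\| \le \kappa\|\nabla h\|$ before the remainder can be shown quadratically small, and this hinges entirely on skew-symmetry eliminating the otherwise uncontrolled $b^*(h,\delta,\delta)$ and $b^*(w,\delta,\delta)$ terms. Once that cancellation is exploited, the remaining estimates are routine applications of \eqref{eqn:bbound} and \eqref{eqn:Gbound}.
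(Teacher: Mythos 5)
Your proof is correct and follows essentially the same route as the paper: subtract the defining equations for $G(w+h)$ and $G(w)$, test with the difference so skew-symmetry of $b^*$ kills the troublesome trilinear terms, define the candidate derivative by the linearized (Oseen-type) problem, and bound the remainder quadratically in $\|\nabla h\|$ via \eqref{eqn:bbound} and \eqref{eqn:Gbound}. The only difference is that you spell out the Lipschitz continuity of $w\mapsto G'(w)$ explicitly (with constant $2\nu^{-1}M\kappa$), a step the paper's proof leaves implicit in its closing sentence.
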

\begin{remark}
By standard fixed point theory, Lemma \ref{lem:Gprop} implies convergence of the Picard 
algorithm, Algorithm \ref{alg:usualPicard}, under the
small data condition $\kappa<1$.  
Moreover, the convergence is global since the result will hold for any initial guess.
\end{remark}

\begin{proof} For $w,\ h \in V_h$, consider equations for $G(w)$ and $G(w+h)$ defined 
by \eqref{eqn:Gop}:
\begin{eqnarray*}
b^*(w,G(w),v)  + \nu(\nabla G(w),\nabla v)   &=&  \langle f,v\rangle \ 
\forall v\in V_h, \\
b^*(w+h,G(w+h),v)  + \nu(\nabla G(w+h),\nabla v)   &=&  \langle f,v\rangle \ 
\forall v\in V_h.
\end{eqnarray*}
Subtracting yields
\begin{eqnarray}
b^*(w+h,G(w+h)-G(w),v) + b^*(h,G(w),v)  + \nu(\nabla (G(w+h)-G(w)),\nabla v) = 0. 
\label{Gdiff}
\end{eqnarray}
Now setting $v=G(w+h)-G(w)$ vanishes the first nonlinear term, and produces 
\begin{eqnarray*}
\nu \| \nabla( G(w+h)-G(w) ) \|^2 & \le & | b^*(h,G(w),G(w+h)-G(w)) | \\
& \le & M \| \nabla h \| \nabla G(w) \|  \| \nabla( G(w+h)-G(w) ) \| \\
& \le & \nu^{-1}M\|f\|_{-1} \| \nabla h \|  \| \nabla( G(w+h)-G(w) ) \|,
\end{eqnarray*}
thanks to \eqref{eqn:bbound} and \eqref{eqn:Gbound}.  This reduces immediately to
\begin{eqnarray}\label{eqn:lip}
\| \nabla( G(w+h)-G(w) ) \| & \le \kappa \| \nabla h\|,
\end{eqnarray}
which proves $G$ is Lipschitz continuous and contractive with contraction ratio $\kappa$.

Next we show the $G$ is Frechet differentiable. First define for a given $w\in V_h$ 
an operator $A_w: V_h \rightarrow V_h$ such that for all $h \in V_h$
\begin{equation} 
\label{defA}
b^*(h, G(w), v) + b^*(w,A_w(h),v) + \nu(\nabla A_w(h), \nabla v) = 0  \ \ 
\forall v\in V_h.
\end{equation}
Using properties for $G$ and $b^*$ established above together with  Lax-Milgram theory
it is easily verified the this linear problem is well-posed and thus $A_w$ is 
well-defined. 

Subtracting \eqref{defA} from \eqref{Gdiff} provides 
\begin{align*}
b^*(w,G(w+h)-G(w)-A_w(h),v)   + & \nu(\nabla (G(w+h)-G(w)-A_w(h)),\nabla v)  \\
&=  -b^*(h,G(w+h)-G(w),v) \\
&\leq M \| \nabla h\| \| \nabla (G(w+h) - G(w))\| \|\nabla v\| \\
&\leq \kappa M\|\nabla h\|^2 \|\nabla v\|,
\end{align*}
for all $v\in V_h$ thanks to \eqref{eqn:bbound} for the first inequality and 
\eqref{eqn:lip} for the second.  This proves that $G$ is Frechet differentiable
at $w$.  From \eqref{eqn:lip} and noting $w\in V_h$ is arbitrary
establishes the result.
\end{proof}

\section{The Anderson-accelerated Picard iteration for NSE}\label{sec:AAP-NSE}
In this section, we define, analyze and test an Anderson-accelerated Picard iteration 
for the steady incompressible NSE.   Although usual Picard, Algorithm 
\ref{alg:usualPicard}, is stable and globally convergent under a small data condition, its
convergence rate can be sufficiently slow that it may fail in practice.
The goal of  combining the Picard iteration with Anderson
acceleration is to improve convergence properties without introducing significant
extra cost.  

We define the Anderson-accelerated Picard iteration for the incompressible steady NSE 
(AAPINSE) as Algorithm \ref{alg:anderson} with $G$ given by \eqref{eqn:Gop}, the 
solution operator for the Picard linearized NSE.  
We note that optimization step of Algorithm \ref{alg:anderson} is negligible in 
computational cost compared to the linear solve associated with applying the $G$ operator.
Hence for each iteration,  this method has nearly the same computational expense as 
usual Picard.

Combining Theorem \ref{thm:conv1} with Lemma \ref{lem:Gprop} establishes local 
convergence of the AAPINSE under the assumption of uniformly bounded optimization 
parameters and a good initial guess.  We prove next for AAPINSE 
that the acceleration does in fact improve the convergence rate of the 
fixed point iteration based on the improvement given by the optimization.  We provide 
results below for the cases of $m=1$ and $m=2$.  We were unable to find an easily 
digestible proof for general $m$, but expect extension to greater values of $m$ 
will follow along similar lines.

\begin{theorem}[Improved convergence of the AAPINSE residual with $m=1$] 
\label{thm:m1}
Suppose $0 < |\alpha_{k-1}^k| < \bar \alpha$ for some fixed $\bar \alpha$.
Then on any step where $\alpha_{k-2}^k \ne 0$, 
the $m=1$ Anderson accelerated Picard iterates satisfy
\begin{equation}\label{eqn:ck-res}
\| \nabla ( G( u_{k} ) - u_k) \| \le \kappa \|\nabla (G(u_{k-1}) - u_{k-1}) \|
\left(\theta_k + C_0 \nr{\grad (G(u_{k-2}) - u_{k-2})} \right),
\end{equation}
with $C_0 = \nu^{-1}M \bar\alpha/(1-\kappa)^2$ 
and where $0 \le \theta_k \le  \theta$ for some fixed $\theta < 1$ 
represents the improvement from the optimization at 
Step $k$ and satisfies \eqref{eqn:n3-002}.
\end{theorem}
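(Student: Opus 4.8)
The plan is to track the residual $G(u_k)-u_k$ directly, using that for $m=1$ the update reads $u_k=\alpha_{k-2}^k G(u_{k-2})+\alpha_{k-1}^k G(u_{k-1})$ with $\alpha_{k-2}^k+\alpha_{k-1}^k=1$. I would introduce the averaged iterate $\hat u_k:=\alpha_{k-2}^k u_{k-2}+\alpha_{k-1}^k u_{k-1}$ (the intermediate quantity $\ua$ of \eqref{eqn:ua_k} associated with the step producing $u_k$) and split
\[
G(u_k)-u_k=\bigl(G(u_k)-G(\hat u_k)\bigr)+\bigl(G(\hat u_k)-u_k\bigr).
\]
The first bracket carries the leading behavior and produces the factor $\kappa\theta_k$; the second bracket $\psi:=G(\hat u_k)-u_k$ measures the failure of $G$ to be affine on the combination and will be shown to be quadratically small, yielding the $C_0\,\|\nabla(G(u_{k-2})-u_{k-2})\|$ correction.

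For the first bracket, observe that $u_k-\hat u_k=\alpha_{k-2}^k(G(u_{k-2})-u_{k-2})+\alpha_{k-1}^k(G(u_{k-1})-u_{k-1})$ is exactly the quantity whose norm is minimized in Step $k\,[b.]$, so by the definition of the optimization gain in \eqref{eqn:ua_k}--\eqref{eqn:n3-002} we have $\|\nabla(u_k-\hat u_k)\|=\theta_k\,\|\nabla(G(u_{k-1})-u_{k-1})\|$. Combined with the contraction estimate \eqref{eqn:lip} of Lemma \ref{lem:Gprop}, this gives
\[
\|\nabla(G(u_k)-G(\hat u_k))\|\le\kappa\,\|\nabla(u_k-\hat u_k)\|=\kappa\theta_k\,\|\nabla(G(u_{k-1})-u_{k-1})\|,
\]
which is precisely the first term on the right of \eqref{eqn:ck-res}.

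The core of the argument is the second bracket. I would write \eqref{eqn:Gop} for $G(\hat u_k)$ and subtract the $\alpha_{k-2}^k,\alpha_{k-1}^k$-weighted sum of \eqref{eqn:Gop} for $G(u_{k-2})$ and $G(u_{k-1})$. Because the weights sum to one and $u_k$ is exactly that weighted sum of $G$-values, the forcing terms cancel and the viscous terms combine into $\nu(\nabla\psi,\nabla v)$. Testing with $v=\psi$ and using skew-symmetry of $b^*$ (so that $b^*(\hat u_k,\psi,\psi)=0$ and $b^*(\hat u_k,G(\hat u_k),\psi)=b^*(\hat u_k,u_k,\psi)$), the trilinear terms collapse to the clean identity
\[
\nu\|\nabla\psi\|^2=\alpha_{k-2}^k\alpha_{k-1}^k\,b^*\bigl(u_{k-1}-u_{k-2},\,G(u_{k-1})-G(u_{k-2}),\,\psi\bigr).
\]
Bounding $b^*$ with \eqref{eqn:bbound}, using \eqref{eqn:lip} in the form $\|\nabla(G(u_{k-1})-G(u_{k-2}))\|\le\kappa\|\nabla(u_{k-1}-u_{k-2})\|$, and dividing by $\|\nabla\psi\|$ yields $\|\nabla\psi\|\le\nu^{-1}M\kappa\,|\alpha_{k-2}^k||\alpha_{k-1}^k|\,\|\nabla(u_{k-1}-u_{k-2})\|^2$.

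The main obstacle is converting this quadratic bound into the product asserted in \eqref{eqn:ck-res}. The tool is the error-to-residual estimate $\|\nabla(u_j-u^*)\|\le(1-\kappa)^{-1}\|\nabla(G(u_j)-u_j)\|$, immediate from $u_j-u^*=-(G(u_j)-u_j)+(G(u_j)-G(u^*))$ and the contraction; applied to the iterate differences inside $\|\nabla(u_{k-1}-u_{k-2})\|^2$ it produces the two factors $(1-\kappa)^{-1}$ in $C_0=\nu^{-1}M\bar\alpha/(1-\kappa)^2$. The delicate point is that the triangle inequality on $u_{k-1}-u_{k-2}=(u_{k-1}-u^*)-(u_{k-2}-u^*)$ gives a \emph{sum} of residuals, whereas \eqref{eqn:ck-res} requires their \emph{product}. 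To recover a genuine product, one natural route is to reuse the gain relation — that $|\alpha_{k-2}^k|\,\|\nabla(G(u_{k-2})-u_{k-2})\|$ is controlled by $(\theta_k+|\alpha_{k-1}^k|)\,\|\nabla(G(u_{k-1})-u_{k-1})\|$, a consequence of \eqref{eqn:n3-002} — to trade one factor of the $w_{k-2}$-residual for the $w_{k-1}$-residual. This is exactly where $\alpha_{k-2}^k\ne0$ is needed and where $|\alpha_{k-1}^k|<\bar\alpha$ is absorbed into $C_0$. Carefully bookkeeping these coefficient factors, rather than deriving the skew-symmetric energy identity, is where the real effort lies.
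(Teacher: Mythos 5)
Your decomposition $G(u_k)-u_k=\bigl(G(u_k)-G(\hat u_k)\bigr)+\bigl(G(\hat u_k)-u_k\bigr)$ with $\hat u_k=\alpha_{k-2}^k u_{k-2}+\alpha_{k-1}^k u_{k-1}$ is genuinely different from the paper's argument, which never forms $G(\hat u_k)$ but instead subtracts the $\alpha$-weighted Picard equations from the equation for $G(u_k)$ and tests with $w_k$. The parts you carry out are correct: $u_k-\hat u_k$ is exactly the minimized quantity, so the first bracket yields $\kappa\theta_k\|\nabla w_{k-1}\|$, and your energy identity $\nu\|\nabla\psi\|^2=\alpha_{k-2}^k\alpha_{k-1}^k\,b^*\bigl(u_{k-1}-u_{k-2},G(u_{k-1})-G(u_{k-2}),\psi\bigr)$ checks out (the diagonal terms of the double sum cancel and skew-symmetry removes $b^*(\hat u_k,\psi,\psi)$), leaving $\|\nabla\psi\|\le\nu^{-1}M\kappa\,|\alpha_{k-2}^k||\alpha_{k-1}^k|\,\|\nabla(u_{k-1}-u_{k-2})\|^2$.

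The gap is in converting that square into the product $\|\nabla w_{k-1}\|\,\|\nabla w_{k-2}\|$. Your error-to-fixed-point estimate gives $\|\nabla(u_{k-1}-u_{k-2})\|\le(1-\kappa)^{-1}\bigl(\|\nabla w_{k-1}\|+\|\nabla w_{k-2}\|\bigr)$; squaring produces a $\|\nabla w_{k-1}\|^2$ term, and the gain relation you invoke, $|\alpha_{k-2}^k|\,\|\nabla w_{k-2}\|\le(\theta_k+|\alpha_{k-1}^k|)\|\nabla w_{k-1}\|$, trades $w_{k-2}$ for $w_{k-1}$ --- the wrong direction --- so it cannot remove that term, and $\|\nabla w_{k-1}\|^2$ does not fit the form of \eqref{eqn:ck-res} (a bound $\|\nabla w_{k-1}\|\le C\|\nabla w_{k-2}\|$ is not provided by the stated hypotheses, which constrain only the current step's coefficients). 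The missing tool is the paper's ``backwards'' inequality \eqref{eqn:c2e2}: the scalar normal equation for the $m=1$ least-squares problem, $\alpha_{k-2}^{k}\|\nabla(w_{k-1}-w_{k-2})\|^2=(\nabla w_{k-1},\nabla(w_{k-1}-w_{k-2}))$, combined with $w_{k-1}-w_{k-2}=\te_k-e_{k-1}$ and the contraction, yields the single-residual bound $\|\nabla e_{k-1}\|\le\bigl((1-\kappa)|\alpha_{k-2}^k|\bigr)^{-1}\|\nabla w_{k-1}\|$ and hence \eqref{eqn:crit2}; this is precisely where $\alpha_{k-2}^k\ne0$ enters. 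Using \eqref{eqn:crit2} at index $k-1$ gives $\|\nabla(u_{k-1}-u_{k-2})\|\le(1-\kappa)^{-1}\|\nabla w_{k-2}\|$, after which your gain-relation trade does close the argument, at the price of the slightly larger constant $\nu^{-1}M\bar\alpha(\theta+\bar\alpha)/(1-\kappa)^2$ in place of $C_0$. (The paper avoids even this last trade because its quadratic remainder is $|\alpha_{k-1}^k|\,\|\nabla e_k\|\,\|\nabla e_{k-1}\|$ with distinct indices, which \eqref{eqn:crit2} converts directly into $\|\nabla w_{k-1}\|\,\|\nabla w_{k-2}\|$.)
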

On any step where $\alpha_{k-2}^k = 0$, meaning $u_k = G(u_{k-1})$ (the standard Picard 
iteration) it holds that $\theta = 1$ and 
$\|G(u_k) - u_k\| \le \kappa \| G(u_{k-1}) - u_{k-1}\|$.  
Assuming $\theta_k < \theta$ for some $\theta < 1$, Theorem \eqref{thm:m1} 
yields an improved convergence rate as $k$ increases,
based on the success of the optimization problem.  
Unlike Theorem \ref{thm:reduc}, the improved
convergence rate is only local; however, the assumptions on the optimization 
coefficients are significantly weaker.

\begin{proof}
Define $e_k, \te_k$ and $w_k$ by \eqref{eqn:notation}.
The structure of the proof is first to establish two key inequalities that bound the 
error by the residual
\begin{align}
\| \nabla \te_{k} \| &\le \kappa \| \nabla e_{k-1} \|,
\label{eqn:crit1} \\
\| \nabla e_k     \| &\le \f{1}{1-\kappa} \| \nabla w_{k-1}\|,
\label{eqn:crit2}
\end{align}
and then to use these for the NSE-specific main result. The first inequality 
\eqref{eqn:crit1} follows directly from \eqref{eqn:lip}. 
The second follows from the decomposition
$e_k = (u_k - \tu_{k}) + (\tu_k - u_{k-1}) = -\alpha_{k-2}^k \te_k + w_{k-1}.$
Using \eqref{eqn:crit1} we have
\begin{equation}\label{eqn:c2e1}
\| \nabla e_{k}\| \le \kappa |\alpha_{k-2}^k| \|\nabla e_{k-1}\|+ \| \nabla w_{k-1}\|.
\end{equation}
The first term on the right of \eqref{eqn:c2e1} can be controlled by the ``backwards'' 
inequality 
\begin{equation}
\| \nabla e_{k-1} \| \le \frac{1}{(1-\kappa) | \alpha^{k}_{k-2} | } \| \nabla w_{k-1} \|,
\label{eqn:c2e2}
\end{equation}
which follows from the closed form expression for $\alpha_{k-2}^{k}$ for $m=1$.
It is based on the contribution $u_{k}$ has from $\tu_{k-1}$, 
and requires the assumption $\alpha_{k-2}^k$ is nonzero.
For $m=1$ the optimization Step $k [b.]$ of Algorithm \ref{alg:anderson} can be 
written as 
$
\alpha_{k-2}^{k} = \argmin{}_{\alpha \in \mathbb{R}} \left\| \grad 
\left( w_{k-1}  + \alpha \left( w_{k-2} - w_{k-1} \right) \right) \right\|,
$ 
from which exploiting the Hilbert space structure 
\[
\alpha^{k}_{k-2} \| \nabla\left( w_{k-1} - w_{k-2} \right) \|^2 
= \left(\nabla  w_{k-1},\nabla \left( w_{k-1} - w_{k-2} \right) \right).
\]
Applying Cauchy-Schwarz on the right reduces this to
$
\| \nabla\left( w_{k-1} - w_{k-2} \right) \|
\le \frac{1}{| \alpha^{k}_{k-2}| }\| \nabla w_{k-1} \|.
$
By the identity $w_{k-1} - w_{k-2} = \te_k - \te_{k-1}$ and the triangle inequality 
\begin{equation}\label{eqn:c2e3}
(1-\kappa)\| \grad e_{k-1}\| \le
\| \nabla e_{k-1} \| - \| \nabla \te_{k} \|
\le 
\| \nabla\left( \tilde e_{k} - e_{k-1}  \right) \|
\le
\frac{1}{| \alpha^{k}_{k-2} | }\| \nabla w_{k-1} \|,
\end{equation}
where the first inequality follows from \eqref{eqn:crit1}.  
Comparing the first and last terms of \eqref{eqn:c2e3} verifies \eqref{eqn:c2e2}, 
and applying \eqref{eqn:c2e2} to \eqref{eqn:c2e1} validates \eqref{eqn:crit2}.

To establish the main result of the theorem, we make use of the two following identities 
which follow from Algorithm \ref{alg:anderson} and 
$u_k =\alpha_{k-1}^k \tilde{u}_k + \alpha_{k-2}^k \tilde u_{k-1}$
\begin{eqnarray}
 \alpha^{k}_{k-1} \tilde e_k &=&  u_k - \tilde u_{k-1} , 
\label{eqn:e1} \\ 
 e_k + \alpha_{k-2}^k e_{k-1} &=& \alpha_{k-1}^k w_{k-1} + \alpha_{k-2}^k w_{k-2}. 
\label{eqn:e3}
\end{eqnarray}
From $\tilde u_{k+1}=G(u_k)$, and \eqref{eqn:Gop}, we have for 
$j \ge 1$ 
\begin{eqnarray} 
\nu(\nabla \tilde u_{j+1}, \nabla v) + b^*(u_{j},\tilde u_{j+1}, v) & = & 
\langle f,v\rangle ~\text{ for all } v \in V_h. \label{eqn:pa0}
\end{eqnarray}

Adding
$\alpha_{k-1}^k$ times \eqref{eqn:pa0} with $j=k-1$ to 
$\alpha_{k-2}^k$ times \eqref{eqn:pa0} with $j = k-2$ and applying the definition of 
$u_k$ together with $\alpha_{k-1}^k + \alpha_{k-2}^k = 1$ produces the equation for $u_k$:
\begin{equation} \label{eqn:pa3}
\nu (\nabla u_k, \nabla v) + b^*(u_{k-1}, u_k,v) - b^*(e_{k-1},\alpha_{k-2}^k \tilde u_{k-1}, v) = \langle f, v \rangle.
\end{equation}
Subtracting \eqref{eqn:pa3} from \eqref{eqn:pa0}, with $j = k$, obtain
\[
\nu (\nabla (\tilde u_{k+1} - u_k), \nabla v) + b^*(u_{k}, \tilde u_{k+1} - u_k, v) 
+ b^*(e_k ,u_k,v) + \alpha_{k-2}^k b^*(e_{k-1}, \tilde u_{k-1}, v) =  0,
\]
which by \eqref{eqn:e1} is equivalent to
\begin{equation}
\nu (\nabla w_k, \nabla v) + b^*(u_{k}, w_k, v) +  
b^*(e_k + \alpha_{k-2}^k e_{k-1}, \tilde u_{k-1},v) + b^*(e_k, \alpha_{k-1}^k \tilde e_k,v)
= 0. \label{eqn:pa3a}
\end{equation}
Choosing $v = w_k$ in \eqref{eqn:pa3a} vanishes the second term.
Applying \eqref{eqn:bbound} and \eqref{eqn:e3} yields
\[
\| \nabla w_k\| \le 
M\nu^{-1} \left(
\|\nabla ( \alpha_{k-1}^k w_{k-1} + \alpha_{k-2}^k w_{k-2}) \| \| \nabla \tilde u_{k-1}\| 
+ \kappa | \alpha_{k-1}^k|  \| \nabla e_{k} \|  \| \nabla e_{k-1} \| \right). 
\]
Finally, applying $\| \nabla \tilde u_{k-1}\| \le \nu^{-1}\|f\|_{-1}$ from 
\eqref{eqn:Gbound} 
together with \eqref{eqn:n3-002} and \eqref{eqn:crit2} we have
\begin{align*}
\| \nabla w_k\| & \le  \kappa \theta_k \|\nabla w_{k-1} \| 
  + \kappa \nu^{-1} M |\alpha_{k-1}^k|\| \nabla e_k\| \|\nabla e_{k-1}\| 
\\
& \le  \kappa \|\nabla w_{k-1} \| \left( \theta + 
\frac{\nu^{-1} M |\alpha_{k-1}^k|}{(1-\kappa)^2} \|\nabla w_{k-2}\| 
\right).
\end{align*}
\end{proof}
Together with the contraction of the underlying fixed-point iteration, 
Theorem \ref{thm:m1} establishes convergence of the residual to zero after the 
first iterate that satisfies $\|\nabla w_{k-2}\| < (1- \kappa\theta)/(\kappa C_0)$;
and, contraction at a faster rate than the fixed-point iteration once 
$\|\nabla w_{k-2}\| < (1 - \theta)/C_0$.
The underlying assumption that the gain from the optimization step is bounded away from 
unity by some fixed $\theta$ for bounded coefficients on steps for which there is a  
contribution to $u_k$ from $\tu_{k-1}$ is a reasonable characterization of 
conditions under which the algorithm should be expected to succeed.

Next, we establish improved convergence of AAPINSE for the case $m=2$.  
The proof strategy is analogous to the $m=1$ case, but with additional technical
details arising from the additional parameter in the optimization step.
We provide the $m=2$ proof as an indication that the extension to greater $m$ would 
follow the same essential idea.

\begin{theorem}[Improved convergence of the AAPINSE residual with $m =2$]\label{thm:m2}
Suppose the coefficients $|\alpha_{j}^{k+1}|$ are bounded, $j = {k-2,k-1,k}$, 
the coefficient corresponding to the latest fixed-point iterate satisfies
$|\alpha_{k}^{k+1}| > \breve \alpha > 0$ and $\alpha_{k}^{k+1} > \alpha_{k-2}^{k+1}$.
Then on any step where at least one of  $\alpha_{k-2}^{k+1}$ or $\alpha_{k-1}^{k+1}$ 
is nonzero the $m = 2$ Anderson accelerated Picard iteration satisfies
\[
\|\nabla (\tilde u_{k+2}  - u_{k+1})\| \le \kappa\theta_{k+1}\| \nabla (\tilde u_{k+1} - u_k)\| + \mathcal{O}(\| \nabla (\tilde u_{k-1}- u_{k-2})\|^2) ,
\]
where $0\le \theta_{k+1}\le \theta$ for some fixed $\theta < 1$ 
satisfies \eqref{eqn:n3-002}.
\end{theorem}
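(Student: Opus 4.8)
The plan is to mirror the $m=1$ proof, adapting each step to the extra optimization parameter. First I would record the ingredients that survive unchanged: the contraction bound $\nr{\grad\te_j}\le\kappa\nr{\grad e_{j-1}}$ from \eqref{eqn:lip}, and the telescoping identity that is the $m=2$ analogue of \eqref{eqn:e3}, namely
\[
e_{k+1}+(\alpha_{k-1}^{k+1}+\alpha_{k-2}^{k+1})e_k+\alpha_{k-2}^{k+1}e_{k-1}=\sum_{j=k-2}^{k}\alpha_j^{k+1}w_j,
\]
obtained by writing $u_{k+1}=\sum_j\alpha_j^{k+1}\tu_{j+1}$, using $\sum_j\alpha_j^{k+1}=1$, and telescoping $\tu_{j+1}-u_k=w_j-\sum_{i=j+1}^{k}e_i$. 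The right-hand side is precisely the minimized combination of Step $k\,[b.]$, so by \eqref{eqn:n3-002} its norm equals $\theta_{k+1}\nr{\grad w_k}$; this will supply the leading $\kappa\theta_{k+1}\nr{\grad w_k}$ term in the estimate.

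Next I would derive the governing equation for $w_{k+1}=\tu_{k+2}-u_{k+1}$. Combining \eqref{eqn:pa0} for $j=k,k-1,k-2$ with weights $\alpha_k^{k+1},\alpha_{k-1}^{k+1},\alpha_{k-2}^{k+1}$ and substituting $u_{k-1}=u_k-e_k$, $u_{k-2}=u_k-e_k-e_{k-1}$ into the first slot of $b^*$ yields the $m=2$ analogue of \eqref{eqn:pa3}, an equation for $u_{k+1}$ carrying the correction terms $-\alpha_{k-1}^{k+1}b^*(e_k,\tu_k,\cdot)$ and $-\alpha_{k-2}^{k+1}b^*(e_k+e_{k-1},\tu_{k-1},\cdot)$. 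Subtracting this from \eqref{eqn:pa0} at $j=k+1$ and using $\tu_{k+2}=u_{k+1}+w_{k+1}$, $u_k=u_{k+1}-e_{k+1}$ isolates $b^*(u_{k+1},w_{k+1},\cdot)$ and $b^*(e_{k+1},u_{k+1},\cdot)$. I would then consolidate every contribution whose second argument can be referred to the oldest iterate $\tu_{k-1}$ — rewriting $u_{k+1}$ and $\tu_k$ via $\tu_{k-1}$ plus the differences $\te_k,\te_{k+1}$ (the $m=2$ analogue of \eqref{eqn:e1}) — so that, through the telescoped identity above, their combined first argument collapses to $\sum_j\alpha_j^{k+1}w_j$. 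Choosing $v=w_{k+1}$ to annihilate $b^*(u_{k+1},w_{k+1},w_{k+1})$ then gives
\[
\nu\nr{\grad w_{k+1}}^2 = -b^*\Bigl(\sum_{j=k-2}^k\alpha_j^{k+1}w_j,\,\tu_{k-1},\,w_{k+1}\Bigr) - R_{k+1},
\]
where $R_{k+1}$ gathers the leftover $b^*(\,\cdot\,,\te_j,\,\cdot\,)$ terms. Bounding the first term with \eqref{eqn:bbound}, \eqref{eqn:Gbound} and the gain identity \eqref{eqn:n3-002} gives $\nu\kappa\theta_{k+1}\nr{\grad w_k}\nr{\grad w_{k+1}}$, while \eqref{eqn:bbound} together with $\nr{\grad\te_j}\le\kappa\nr{\grad e_{j-1}}$ reduces each term of $R_{k+1}$ to a product of two error norms times $\nr{\grad w_{k+1}}$.

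The main obstacle is the backward inequality converting those error norms into residual norms — the $m=2$ replacement for \eqref{eqn:c2e2}. For $m=1$ this came from the closed form of the single coefficient; for $m=2$ the coefficients $\alpha_{k-2}^{k+1},\alpha_{k-1}^{k+1},\alpha_k^{k+1}$ solve the $2\times2$ normal-equation system associated with minimizing $\nr{\grad\sum_j\alpha_j^{k+1}w_j}$ subject to $\sum_j\alpha_j^{k+1}=1$, and I would extract from its inverse a lower bound forcing $\nr{\grad e_j}\lesssim\nr{\grad w_{k-2}}$. This is exactly where the hypotheses $|\alpha_k^{k+1}|>\breve\alpha>0$ and $\alpha_k^{k+1}>\alpha_{k-2}^{k+1}$ are used: they keep the normal-equation matrix nondegenerate and prevent the oldest-iterate coefficient from dominating, so the system can be inverted with constants controlled independently of $k$. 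Once $R_{k+1}=\bigo(\nr{\grad w_{k-2}}^2)\,\nr{\grad w_{k+1}}$ is established, dividing the displayed identity by $\nu\nr{\grad w_{k+1}}$ and recalling $\kappa=M\nu^{-2}\|f\|_{-1}$ yields the claim. I expect the algebra of this $2\times2$ least-squares analysis, rather than any conceptual novelty beyond the $m=1$ argument, to constitute the bulk of the work.
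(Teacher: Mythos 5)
Your overall architecture matches the paper's: you isolate the term $b^*(u_{k+1}-u_k^\alpha,\tilde u_{k-1},v)$ whose first argument is exactly the minimized combination $\sum_j\alpha_j^{k+1}w_j$ (your telescoped identity is correct and is precisely how the paper extracts the leading $\kappa\theta\nr{\grad w_k}$ term), you test with $v=w_{k+1}$ to annihilate the skew term, and you correctly recognize that the remainder is quadratic in the errors $e_k,e_{k-1}$ and must be converted to residuals by a backward inequality. The gap is in how you propose to obtain that backward inequality. You plan to invert the $2\times 2$ normal-equation system and assert that the hypotheses $|\alpha_k^{k+1}|>\breve\alpha$ and $\alpha_k^{k+1}>\alpha_{k-2}^{k+1}$ ``keep the normal-equation matrix nondegenerate.'' They do not: the Gram matrix of $\{w_k-w_{k-1},\,w_{k-1}-w_{k-2}\}$ can be arbitrarily close to singular (the two difference vectors nearly parallel) while the coefficients remain bounded and $\alpha_k^{k+1}$ stays away from zero, so inverting it ``with constants controlled independently of $k$'' is not justified. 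Moreover the bound you aim for, $\nr{\grad e_j}\lesssim\nr{\grad w_{k-2}}$ for the bare errors, is stronger than what is available: coefficients such as $\alpha_{k-2}^{k+1}$ may vanish, so only weighted products like $|\alpha_{k-2}^{k+1}|\,\nr{\grad e_{k-1}}$ can be controlled.

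The paper's Lemma~\ref{lem:m2} avoids any inversion. It writes the constrained minimization in two different affine parametrizations, $(\alpha_k,\beta_0)$ with $\beta_0=\alpha_k+\alpha_{k-1}$ and $(\beta_1,\alpha_{k-2})$ with $\beta_1=\alpha_{k-1}+\alpha_{k-2}$, applies Cauchy--Schwarz to each scalar normal equation \emph{separately} to bound $|\alpha_k|\,\ns{w_k-w_{k-1}}$ and $|\beta_0|\,\ns{w_{k-1}-w_{k-2}}$ (and their counterparts) by combinations of $\ns{w_k},\ns{w_{k-1}},\ns{w_{k-2}}$, and then uses the contraction lower bound $(1-r)\ns{e_n}\le\ns{w_n-w_{n-1}}$ to pass to the weighted errors, yielding \eqref{eqn:ekalpha}--\eqref{eqn:ek1beta}. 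Only $\alpha_k^{k+1}$ is ever placed in a denominator (hence the $\breve\alpha$ hypothesis), and the hypothesis $\alpha_k^{k+1}>\alpha_{k-2}^{k+1}$ is used for the separate purpose of bounding $|\alpha_{k-1}^{k+1}|$ by quantities the lemma already controls, not to enforce any nondegeneracy of the least-squares system. You would need to replace your inversion step with an argument of this kind for the proof to close.
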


The proof follows the same general strategy as the $m=1$ case, and again establishes
local convergence of the algorithm (with mild assumptions on the coefficients) after
the first iterate where $\norm{\grad w_{k-2}}$ is small enough; 
and, with an improved rate 
when the accelerated solution is other than the fixed-point iterate.
We precede the proof with a technical lemma to establish four key inequalities
which bound the difference between accelerated iterates by the latest three residuals. 
As this is a general result (not NSE-specific), it is posed in the same notation as 
\S \ref{sec:anderson}.
\begin{lemma}\label{lem:m2}
Let the sequences $\{u_k\}$ and $\{\tu_k\}$ be given by Algorithm \ref{alg:anderson}
with $m=2$, and define $e_k, \te_k$ and $w_k$ by \eqref{eqn:notation}.
Let $G: X \goto X$ satisfy Assumption \ref{assume:G} with constant $r< 1$ 
where $X$ is a Hilbert space
with norm $\ns{\cdot}$ induced by inner product $(\,\cdot ,\cdot \, )_*$.
Then the following hold for $k > 1$.
\begin{align} 
\label{eqn:ekalpha}
|\alpha_k^{k+1}|\ns{e_{k}} & \le \frac{1}{(1-r)}
  \left(|1-\alpha_{k-2}^{k+1}|\ns{w_{k-1}} 
  + |\alpha_{k-2}^{k+1}|\ns{w_{k-2}}\right)
\\ \label{eqn:ekbeta}
|1-\alpha_k^{k+1}|\ns{e_{k}} & \le \frac{1}{(1-r)}
  \left(|1-\alpha_{k}^{k+1}|\ns{w_{k-1}} 
   + (1+|\alpha_{k}^{k+1}|)\ns{w_{k}}\right)
\\ \label{eqn:ek1alpha}
|\alpha_{k-2}^{k+1}|\ns{e_{k-1}} & \le \frac{1}{(1-r)}
  \left(|1-\alpha_{k}^{k+1}|\ns{w_{k-1}} 
  + |\alpha_{k}^{k+1}|\ns{w_{k}}\right)
\\ \label{eqn:ek1beta}
|1-\alpha_{k-2}^{k+1}|\ns{e_{k-1}} & \le \frac{1}{(1-r)}
  \left(|1-\alpha_{k-2}^{k+1}|\ns{w_{k-1}} 
  + (1+|\alpha_{k-2}^{k+1}|)\ns{w_{k-2}}\right)
\end{align}
\end{lemma}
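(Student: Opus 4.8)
\emph{Approach.} The plan is to derive all four bounds from one template: write the relevant weighted error $\sigma e_j$ (with $\sigma$ a coefficient such as $\alpha_k^{k+1}$ or $1-\alpha_{k-2}^{k+1}$) as a contraction-controlled term plus a residual term, use a reverse triangle inequality to isolate $(1-r)|\sigma|\ns{e_j}$, and finally bound the residual term. Two facts make this work. From Assumption \ref{assume:G}, $\ns{\te_{j+1}} = \ns{G(u_j)-G(u_{j-1})} \le r\ns{e_j}$. From the definitions \eqref{eqn:notation}, $w_j - w_{j-1} = (\tu_{j+1}-u_j)-(\tu_j-u_{j-1}) = \te_{j+1}-e_j$, so that $\sigma e_j = \sigma\te_{j+1} + \sigma(w_{j-1}-w_j)$ for every scalar $\sigma$. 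Taking norms and using $\ns{\sigma\te_{j+1}}\le r|\sigma|\ns{e_j}$ in the reverse triangle inequality gives the master estimate $(1-r)|\sigma|\ns{e_j} \le \ns{\sigma(w_{j-1}-w_j)}$, the starting point for each case.

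\emph{The estimates \eqref{eqn:ekbeta} and \eqref{eqn:ek1beta}.} These follow immediately from the master estimate. For \eqref{eqn:ekbeta} take $\sigma = 1-\alpha_k^{k+1}$ and $j = k$; the triangle inequality on $\sigma(w_{k-1}-w_k)$ followed by the crude bound $|1-\alpha_k^{k+1}| \le 1+|\alpha_k^{k+1}|$ on the $w_k$ term yields the stated right-hand side. For \eqref{eqn:ek1beta} take $\sigma = 1-\alpha_{k-2}^{k+1}$ and $j = k-1$ and proceed identically, applying the crude bound on the $w_{k-2}$ term.

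\emph{The estimates \eqref{eqn:ekalpha} and \eqref{eqn:ek1alpha}.} Here the residual term must be reshaped using the optimality of the Anderson combination; this is the heart of the proof. Set $p_k = u_{k+1}-\ua_k = \sum_{j=k-2}^k \alpha_j^{k+1}w_j$, which by \eqref{eqn:ua_k}--\eqref{eqn:n3-002} satisfies $\ns{p_k} = \theta_k\ns{w_k}$ and solves the constrained minimization in \eqref{eqn:n3-002}. For \eqref{eqn:ekalpha} take $\sigma = \alpha_k^{k+1}$, $j = k$; substituting $\alpha_k^{k+1}w_k = p_k - \alpha_{k-1}^{k+1}w_{k-1}-\alpha_{k-2}^{k+1}w_{k-2}$ into the master residual term and using $\alpha_{k-1}^{k+1}+\alpha_k^{k+1} = 1-\alpha_{k-2}^{k+1}$ rewrites it as $q - p_k$ with $q = (1-\alpha_{k-2}^{k+1})w_{k-1}+\alpha_{k-2}^{k+1}w_{k-2}$. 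The coefficients of $q$ sum to one, so $q$ is feasible for the minimization \eqref{eqn:n3-002} (it is the combination with $w_k$-weight zero). Since $p_k$ is the orthogonal projection of the origin onto that affine set, $p_k\perp(q-p_k)$, whence $\ns{q}^2 = \ns{p_k}^2+\ns{q-p_k}^2$ and therefore $\ns{\sigma(w_{k-1}-w_k)} = \ns{q-p_k} \le \ns{q}$; a triangle inequality on $q$ gives exactly the right-hand side of \eqref{eqn:ekalpha}. The estimate \eqref{eqn:ek1alpha} is obtained the same way with $\sigma = \alpha_{k-2}^{k+1}$, $j = k-1$, substituting instead for $\alpha_{k-2}^{k+1}w_{k-2}$ and producing $q' = (1-\alpha_k^{k+1})w_{k-1}+\alpha_k^{k+1}w_k$.

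\emph{Main obstacle.} The difficulty is concentrated in \eqref{eqn:ekalpha}--\eqref{eqn:ek1alpha}: a naive triangle bound on the rewritten residual would leave an extra $+\ns{p_k} = +\theta_k\ns{w_k}$, which is absent from the claimed estimates and would be fatal in the proof of Theorem \ref{thm:m2}. The key observation is that the two-term combination $q$ is itself feasible for the optimization \eqref{eqn:n3-002}, so the Pythagorean identity coming from the optimality (projection) of $p_k$ absorbs $p_k$ and yields $\ns{q-p_k}\le\ns{q}$ at no cost. Everything else is the reverse triangle maneuver and bookkeeping of the constraint $\sum_j\alpha_j^{k+1}=1$.
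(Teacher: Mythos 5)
Your proof is correct. It shares its foundation with the paper's argument --- the reverse-triangle estimate $(1-r)\ns{e_n} \le \ns{\te_{n+1}-e_n} = \ns{w_n - w_{n-1}}$, which is exactly the paper's \eqref{eqn:lem2-e0} and your ``master estimate'' --- but the two proofs diverge in how they then bound $\ns{\sigma(w_j-w_{j-1})}$. The paper reparametrizes the minimization twice, in the variables $(\alpha_k,\beta_0)$ and $(\beta_1,\alpha_{k-2})$, writes out the critical-point equations \eqref{eqn:lem2-e1}--\eqref{eqn:lem2-e2} of the quadratic objective, and extracts all four estimates from these stationarity identities via Cauchy--Schwarz and the triangle inequality. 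You instead make two observations: first, that \eqref{eqn:ekbeta} and \eqref{eqn:ek1beta} require no optimality at all, following from $\ns{w_j-w_{j-1}}\le\ns{w_j}+\ns{w_{j-1}}$ and the crude bound $|1-\alpha|\le 1+|\alpha|$; second, that for \eqref{eqn:ekalpha} and \eqref{eqn:ek1alpha} the residual term equals $q-p_k$ with $p_k$ the minimum-norm point of the affine feasible set and $q$ another feasible point, so $\ns{q-p_k}\le\ns{q}$ by the Pythagorean identity for the projection. Your projection argument and the paper's stationarity-plus-Cauchy--Schwarz argument encode the same first-order optimality --- both reduce to the identical intermediate inequality $\ns{\alpha_k^{k+1}(w_k-w_{k-1})}\le\ns{(1-\alpha_{k-2}^{k+1})w_{k-1}+\alpha_{k-2}^{k+1}w_{k-2}}$ and its analogue --- but yours is coordinate-free, avoids the two reparametrizations, and makes transparent which of the four bounds actually use the optimization and which are generic. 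The paper's version is more mechanical but produces \eqref{eqn:lem2-e4} (and hence \eqref{eqn:ek1beta}) as a byproduct of \eqref{eqn:lem2-e3}. There is no gap in your argument; the only implicit hypothesis, that the algorithm's coefficients exactly solve the constrained minimization so that $p_k$ is the true projection of the origin, is the same one the paper needs for its critical-point equations to hold.
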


\begin{proof} 
Without confusion, denote $\alpha_j^{k+1}$ by $\alpha_j$, for $j = \{k-2, k-1,k\}$.
First, by  \ref{assume:G} and the triangle inequality we have
\begin{align} 
\label{eqn:lem2-e0}
(1-r) \ns{e_n} \le \ns{e_n} - \ns{ \te_{n+1} } \le \ns{\te_{n+1} - e_n} 
= \ns{ w_{n} - w_{n-1}}.
\end{align}
To derive \eqref{eqn:ekalpha} and \eqref{eqn:ek1beta}, write the
Step $k [b.]$ minimization problem of Algorithm \ref{alg:anderson} in the equivalent form:
Find $(\alpha_k,\beta_0)$ that minimize 
\[
\ns{ \left( \alpha_{k} (w_{k}-w_{k-1}) + \beta_{0}(w_{k-1}-w_{k-2}) + 
w_{k-2} \right) }^2,
\]
with $\beta_0 = \alpha_k + \alpha_{k-1}$ (so from 
$\alpha_k + \alpha_{k-1} + \alpha_{k-1}=1$ we have $1-\beta_0 = \alpha_{k-2}$).
Exploiting the Hilbert space structure, the critical points $\alpha_k$ and $\beta_0$ 
are the solutions of 
\begin{align}\label{eqn:lem2-e1}
\alpha_{k}\ns{w_k - w_{k-1}}^2 &= 
 -(w_k - w_{k-1}, \beta_0 w_{k-1} + (1-\beta_0)w_{k-2})_*,
\\ \label{eqn:lem2-e2}
\beta_0\ns{w_{k-1} - w_{k-2}}^2 &= 
 -(w_{k-1} - w_{k-2}, \alpha_k (w_k -w_{k-1}) + w_{k-2})_*.
\end{align}
Applying Cauchy-Schwarz and triangle inequalities together to \eqref{eqn:lem2-e1}  
yields
\begin{align}\label{eqn:lem2-e3}
|\alpha_k| \ns{w_k - w_{k-1}} \le |1-\alpha_{k-2}|\ns{w_{k-1}} 
  + |\alpha_{k-2}|\ns{w_{k-2}}.
\end{align}
Applying the same estimates together with \eqref{eqn:lem2-e3} to \eqref{eqn:lem2-e2} 
yields
\begin{align}\label{eqn:lem2-e4}
|\beta_0| \ns{w_{k-1} - w_{k-2}} \le |1-\alpha_{k-2}|\ns{w_{k-1}} 
+ (1+|\alpha_{k-2}|)\ns{w_{k-2}}.
\end{align}
Combining \eqref{eqn:lem2-e0} with \eqref{eqn:lem2-e3} (respectively \eqref{eqn:lem2-e4})
yields \eqref{eqn:ekalpha} (respectively \eqref{eqn:ek1beta}).

Following the same process with the minimization problem written in the equivalent form:
Find $(\beta_1,\alpha_{k-2})$ that minimize 
\[
\ns{ \left( w_k + \beta_{1} (w_{k-1}-w_{k}) + \alpha_{k-2}(w_{k-2}-w_{k-1} 
 \right) }^2,
\]
with $\beta_1 = \alpha_{k-1} + \alpha_{k-2}$ (which implies $1-\beta_1 = \alpha_k$)
establishes \eqref{eqn:ekbeta} and \eqref{eqn:ek1alpha}.
\end{proof}
The purpose of the four estimates \eqref{eqn:ekalpha}-\eqref{eqn:ek1beta}
is to bound the terms $\norm{\grad e_k}$ and $\norm{\grad e_{k-1}}$ where they appear
in the following estimates by $\norm{\grad w_k}, \norm{\grad w_{k-1}}$ 
and $\norm{\grad w_{k-2}}$, without introducing optimization coefficients other than
$\alpha_k^{k+1}$ in the denominator. This is important as only $\alpha_k^{k+1}$ is
justifiably bounded away from zero.
We proceed now with the proof of Theorem \ref{thm:m2} applying Lemma \ref{lem:m2} with
$\ns{v} = \norm{\grad v}$ and $r = \kappa$.
\begin{proof}[Proof of Theorem \ref{thm:m2}]
Recall the solution from Step $k$ is defined as
$u_{k+1} = \alpha_k \tilde u_{k+1} + \alpha_{k-1} \tilde u_{k} + \alpha_{k-2} 
\tilde u_{k-1}$, with $\alpha_j^{k+1}$ denoted $\alpha_j$ for $j = \{k-2,k-1,k\}$.
From the problem definition \eqref{eqn:pnse3}, 
the following equation holds for $n = \{k-2, k-1, k,k+1\}$ 
\begin{align}
\nu( \nabla  \tilde u_{n+1}, \nabla v) + b^*(u_{n}, \tilde u_{n+1}, v) 
= \langle f,v \rangle,
\label{pa00}
\end{align}
thus as in \eqref{eqn:pa3} we have 
\[
\nu(\nabla u_{k+1} , \nabla v) + \sum_{j= k-2}^{k} \alpha_{j} 
b^*(u_j, \tilde u_{j+1}, v) = \langle f,v\rangle.
\]
Subtracting the above equation from \eqref{pa00} with $n = k+1$ yields
\begin{equation}
\label{eqn:l1}
\nu(\nabla (\tilde u_{k+2} - u_{k+1}), \nabla v) + b^*(u_{k+1} , \tilde u_{k+2} - u_{k+1}, v) + b^*(u_{k+1}, u_{k+1},v) 
- \sum\limits_{j= k-2}^{k} \alpha_{j} b^*(u_j, \tilde u_{j+1}, v)  = 0.
\end{equation}
Next, rewrite the last two terms on the left hand side in terms of $e_k$, $\tilde e_k$
and $u_k^\alpha$ given by \eqref{eqn:ua_k}. 
\begin{align*}
&b^*(u_{k+1}, u_{k+1},v) - \sum_{j=k-2}^k \alpha_j b^*(u_j, \tilde u_{j+1}, v) 
\\
&=  b^*\left(u_{k+1}-u_k^\alpha , \tilde u_{k-1},v\right)
+ b^*\left(u_k^\alpha, \tilde u_{k-1},v\right) 
+ b^*(u_{k+1}, u_{k+1} - \tilde u_{k-1},v) 
- \sum\limits_{j= k-2}^{k} \alpha_{j} b^*(u_j, \tilde u_{j+1}, v)
\\
&=  b^*\left(u_{k+1}\!-u_k^\alpha, \tilde u_{k-1},v\right)
+ b^*(u_{k+1}, u_{k+1} \!- \tilde u_{k-1},v) 
\!- b^*(u_k,\alpha_k (\tilde e_{k+1} + \tilde e_k),v) 
\!-   b^*(u_{k-1}, \alpha_{k-1}\tilde e_k,v).
\end{align*}
Now using the identity
$u_{k+1} - \tilde u_{k-1} 
= \alpha_{k} \tilde e_{k+1} + (\alpha_k + \alpha_{k-1} ) \tilde e_{k},$
produces
\begin{align*}
& b^*(u_{k+1}, u_{k+1},v) - \sum_{j= k-2}^{k} \alpha_{j} 
b^*(u_j, \tilde u_{j+1}, v) 
\\ &
=  b^*\left(u_{k+1}-u_k^\alpha, \tilde u_{k-1},v\right) 
 + b^*(e_{k+1}, \alpha_k\tilde e_{k+1} 
+ (\alpha_k + \alpha_{k-1})\tilde e_{k},v)
+ b^*(e_k, \alpha_{k-1} \tilde e_k,v),
\end{align*}
and replacing $e_{k+1}$ by
\[e_{k+1} = (u_{k+1} - \tilde u_{k+1}) + (\tilde u_{k+1} - u_k) = 
- (\alpha_{k-1} + \alpha_{k-2}) \tilde e_{k+1} - \alpha_{k-2}\tilde e_k + (\tilde u_{k+1} - u_k),
\] 
gives
\begin{align*}
& b^*(u_{k+1}, u_{k+1},v) - \sum_{j= k-2}^{k} \alpha_{j} b^*(u_j, \tilde u_{j+1}, v) 
\\ &
= b^*\left(u_{k+1}-u_k^\alpha, \tilde u_{k-1},v\right) 
 - b^*( (\alpha_{k-1} + \alpha_{k-2}) \tilde e_{k+1} 
+ \alpha_{k-2}\tilde e_k, \alpha_k\tilde e_{k+1} 
\\ &
+ (\alpha_k + \alpha_{k-1})\tilde e_{k},v) 
 + b^*(\tilde u_{k+1} - u_k,\alpha_k\tilde e_{k+1} 
+ (\alpha_k + \alpha_{k-1})\tilde e_{k}, v)
+ b^*(e_k, \alpha_{k-1} \tilde e_k,v) .
\end{align*}
Thus, \eqref{eqn:l1} can be written as
\begin{align}
&\nu(\nabla w_{k+1}, \nabla v) 
+ b^*(u_{k+1} , w_{k+1}, v) 
+ b^*\left(u_{k+1}-u_k^\alpha, \tilde u_{k-1},v\right) 
\nonumber \\ &
- b^*( (\alpha_{k-1} + \alpha_{k-2}) \tilde e_{k+1} 
+ \alpha_{k-2}\tilde e_k, \alpha_k\tilde e_{k+1} 
+ (\alpha_k + \alpha_{k-1})\tilde e_{k},v) 
\nonumber \\ &
+ b^*(w_k,\alpha_k\tilde e_{k+1} 
+ (\alpha_k + \alpha_{k-1})\tilde e_{k}, v)
+ b^*(e_k, \alpha_{k-1} \tilde e_k,v)  = 0.
\label{eqn:l01}
\end{align}

Next, setting  $v = w_{k+1}$ in \eqref{eqn:l01} yields
\begin{align}
\nu \| \nabla w_{k+1}\|^2 
&=  - b^*\left(u_{k+1}-u_k^\alpha, \tilde u_{k-1},w_{k+1} \right)
\nonumber \\
& + b^*\left( (\alpha_{k-1} + \alpha_{k-2}) \tilde e_{k+1} + \alpha_{k-2}\tilde e_k, \alpha_k\tilde e_{k+1} + (\alpha_k + \alpha_{k-1})\tilde e_{k},w_{k+1} \right) 
\nonumber \\
& - b^*(w_k,\alpha_k\tilde e_{k+1} + (\alpha_k + \alpha_{k-1})\tilde e_{k}, w_{k+1})
- b^*(e_k, \alpha_{k-1} \tilde e_k,w_{k+1}),
\label{eqn:l2}
\end{align}
and we proceed to bound the right hand side terms.  For the first term
\begin{align*}
 b^*\left(u_{k+1}-u_k^\alpha, \tilde u_{k-1}, w_{k+1}\right) 
& \le M \norm{ \nabla \left(u_{k+1}-u_k^\alpha\right) } 
\norm{ \nabla \tilde u_{k-1}\| \|\nabla w_{k+1}}
\\ & 
\le \nu^{-1}M\|f\|_{-1} \theta_{k}\norm{\nabla w_k}\norm{\nabla w_{k+1}},
\end{align*}
using \eqref{eqn:ua_k}, \eqref{eqn:n3-002} and 
$\| \nabla \tilde u_{k-1}\| \le \nu^{-1}\|f\|_{-1}$. 
The second term of \eqref{eqn:l2} is majorized via
\begin{align}
\label{eqn:bd02}
&M \|\nabla ( (\alpha_{k-1} + \alpha_{k-2}) \tilde e_{k+1} + \alpha_{k-2}\tilde e_k)\| \| \nabla (\alpha_k\tilde e_{k+1} + (\alpha_k + \alpha_{k-1})\tilde e_{k})\| 
\|\nabla w_{k+1}\| \nonumber\\
&\le  M \kappa^2 \| \nabla w_{k+1}\|
 \left(     
 | 1- \alpha_{k}| | \alpha_k \| \nabla e_k\|^2 +  |1 - \alpha_{k-2}| |\alpha_{k-2}| 
\norm{\nabla e_{k-1}}^2 \right) \nonumber \\
& +M \kappa^2 \norm{\nabla w_{k+1}} 
\left( |\alpha_k||\alpha_{k-2}| + |1 - \alpha_{k}| |1- \alpha_{k-2}|  \right) 
\norm{\nabla e_k\|\|\nabla e_{k-1}}.
\end{align}
Applying \eqref{eqn:ekalpha}-\eqref{eqn:ek1beta} from Lemma \ref{lem:m2}, 
\eqref{eqn:bd02} is controlled by 
\begin{align}\label{eqn:bd02e}
& \f{M \kappa^2}{(1-\kappa)^2} \| \nabla w_{k+1}\|
\nonumber \\ & \times
 \Big(     
 \big( |1-\alpha_{k-2}|\norm{\grad w_{k-1}} + |\alpha_{k-2}|\norm{\grad w_{k-2}}\big)
 \big(|1-\alpha_{k}|\norm{\grad w_{k-1}} + (1+|\alpha_{k}|)\norm{\grad w_{k}}\big)
\nonumber \\ & +
 \big( |1-\alpha_{k}|\norm{\grad w_{k-1}} + |\alpha_{k}|\norm{\grad w_{k}}\big)
 \big(|1-\alpha_{k-2}|\norm{\grad w_{k-1}} + (1+|\alpha_{k-2}|)\norm{\grad w_{k-2}}\big)
 \Big)
\nonumber \\ & + \Big(
 \big( |1-\alpha_{k-2}|\norm{\grad w_{k-1}} + |\alpha_{k-2}|\norm{\grad w_{k-2}}\big)
 \big( |1-\alpha_{k}|\norm{\grad w_{k-1}} + |\alpha_{k}|\norm{\grad w_{k}}\big)
\nonumber \\ & +
 \big(|1-\alpha_{k}|\norm{\grad w_{k-1}} + (1+|\alpha_{k}|)\norm{\grad w_{k}}\big)
 \big(|1-\alpha_{k-2}|\norm{\grad w_{k-1}} + (1+|\alpha_{k-2}|)\norm{\grad w_{k-2}}\big)
\Big).
\end{align}
Using \eqref{eqn:ekalpha} and \eqref{eqn:ek1beta},
the third term on the right hand side of \eqref{eqn:l2} is bounded by 
\begin{align*}
&M\kappa 
\| \nabla w_{k+1} \| \| \nabla w_{k}\| 
\left( |\alpha_k| \| \nabla e_k\| + |1 - \alpha_{k-2}| \|\nabla e_{k-1}\| \right)
\\ & \le 
\frac{M\kappa}{(1-\kappa)} 
\| \nabla w_{k+1} \| \| \nabla w_{k}\| 
\big(2|1-\alpha_{k-2}|\norm{\grad w_{k-1}} + (1+2|\alpha_{k-2}|)\norm{\grad w_{k-2}}\big).
\end{align*}
By the assumption $\alpha_k \ge \alpha_{k-2}$ we have
\[
\alpha_{k-1} = (\alpha_{k-1} + \alpha_{k-2}) - \alpha_{k-2}
             = (1 - \alpha_k) - \alpha_{k-2} \le (1-\alpha_{k-2}) - \alpha_{k-2}.
\]
Using this together with \eqref{eqn:ekalpha},\eqref{eqn:ek1alpha} and \eqref{eqn:ek1beta},
the last term of \eqref{eqn:l2} is controlled by
\begin{align}
\label{eqn:bd04}
&M \kappa \norm{\nabla w_{k+1}}
| \alpha_{k-1}|  \| \nabla e_k\|  \| \nabla e_{k-1}\|
\nonumber \\ & \le
\frac{M \kappa}{(1-\kappa)^2} \norm{\grad w_{k+1}} \frac{1}{|\alpha_k|}
 \big( |1-\alpha_{k-2}|\norm{\grad w_{k-1}} + |\alpha_{k-2}|\norm{\grad w_{k-2}}\big)
\nonumber \\ & \times
 \big( (|1-\alpha_{k}| + |1 - \alpha_{k-2}|)\norm{\grad w_{k-1}} 
  + |\alpha_{k}|\norm{\grad w_{k}} + (1+|\alpha_{k-2}|)\norm{\grad w_{k-2}}\big).
\end{align}

Finally, combining \eqref{eqn:l2}-\eqref{eqn:bd04} yields
\begin{align*}
\norm{\grad w_{k+1}} & \le
\kappa \theta_k \norm{\grad w_k}  
+\frac{M \nu^{-1} \kappa}{(1-\kappa)}\Bigg( \norm{\grad w_k}
\left(c_1 \norm{\grad w_{k-1}} + c_2 \norm{\grad w_{k-2}} \right)
\\ & +
\left(\frac{\kappa}{1-\kappa}
+\frac{1}{\breve \alpha (1-\kappa)}\right)\times
\bigo\left(\norm{\grad w_{k-2}}^2\right) \Bigg)
\\ & = 
\kappa \theta_k \norm{\grad w_k} + \bigo\left(\norm{\grad w_{k-2}}^2\right),
\end{align*}
where all the implicitly defined constants are sums and products of the bounded
$|\alpha_k|, |1-\alpha_k|, |\alpha_{k-2}|$ and $|1-\alpha_{k-2}|$.  The only optimization
coefficient that makes an appearance in a denominator is  $\alpha_{k}^{k+1}$.  
It is a reasonable assumption this coefficient is bounded away from zero as without a 
contribution from the latest fixed-point iterate $\tilde u_{k+1}$, the new solution 
$u_{k+1}$ remains spanned by the same (less one) basis vectors as $u_k$ and should
not yield an improved residual.
\end{proof}

\section{Numerical experiments}\label{sec:numerics}
Here we present numerical experiments to show the improved convergence provided by the 
Anderson acceleration for solving the steady NSE.  
As illustrated below, Anderson acceleration can provide fast 
convergence even when Newton and usual Picard iterations fail.
Our test problems are the 2D and 3D driven cavity, at varying Reynolds numbers.  All computations were done in Matlab with the
authors' codes, and `fminsearch' was used to solve the optimization problems.

\subsection{2D lid driven cavity}\label{subsec:2Dlid}
We test now AAPINSE on the 2D driven cavity, at 
benchmark values of $Re=$1000, 2500, and 5000, and compare results with those of the 
usual Picard and Newton methods.
\begin{figure}[H]
\begin{center}
$Re$=1000 \hspace{1.4in} $Re$=2500 \hspace{1.4in} $Re$=5000  \\
\includegraphics[width = .32\textwidth, height=.3\textwidth,viewport=50 0 470 395, clip]{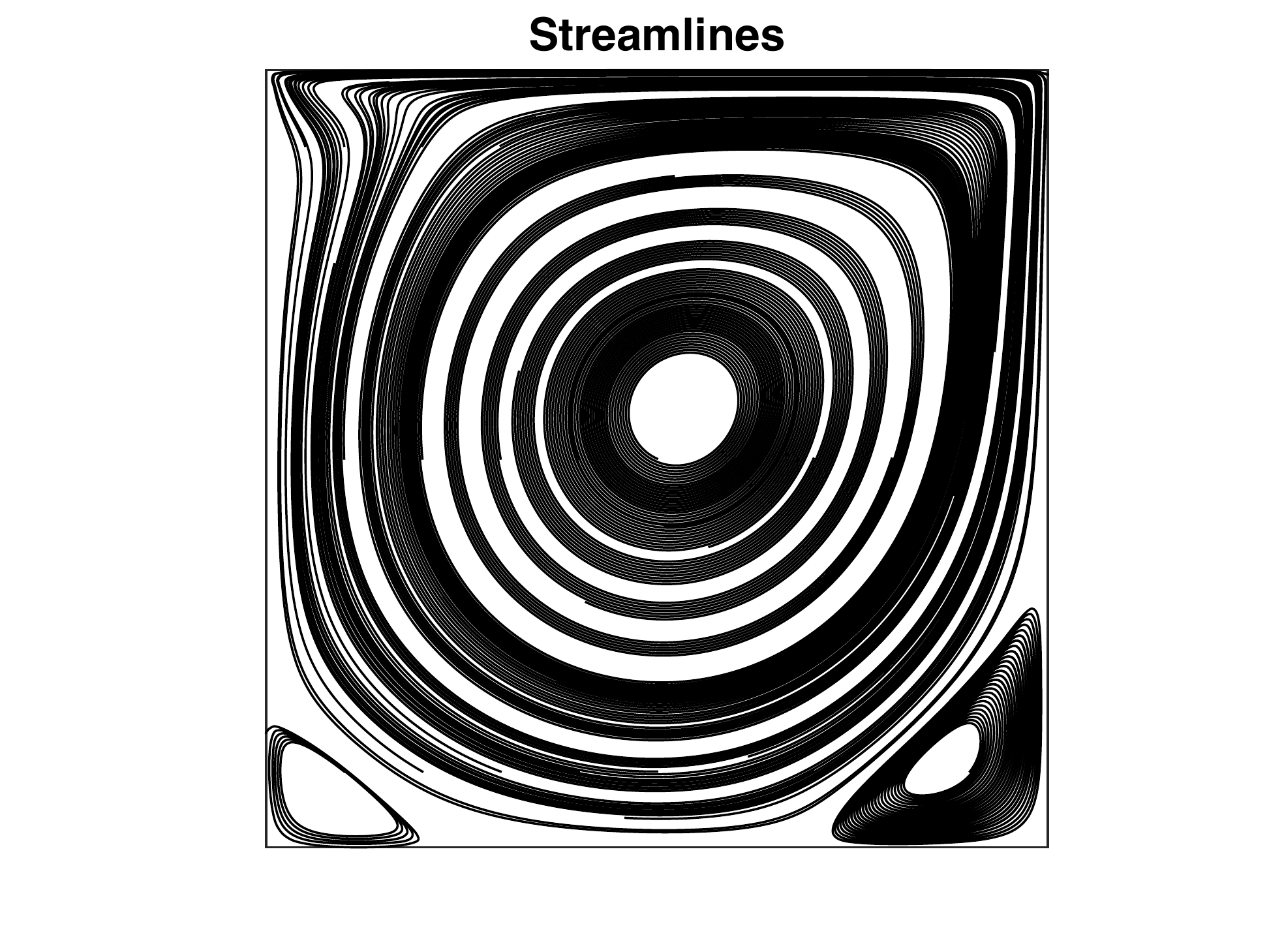}
\includegraphics[width = .32\textwidth, height=.3\textwidth,viewport=50 0 470 395, clip]{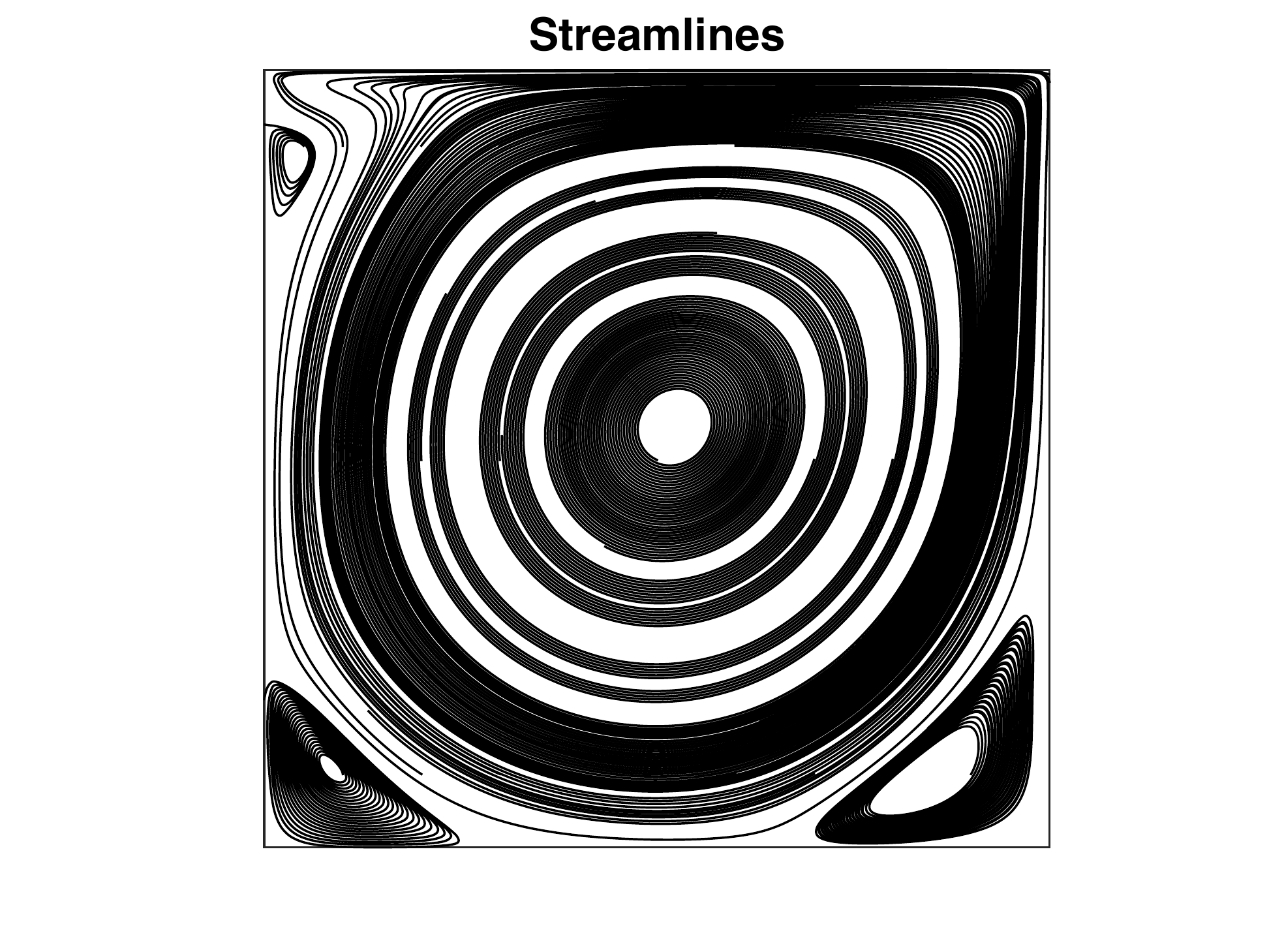}
\includegraphics[width = .32\textwidth, height=.3\textwidth,viewport=50 0 470 395, clip]{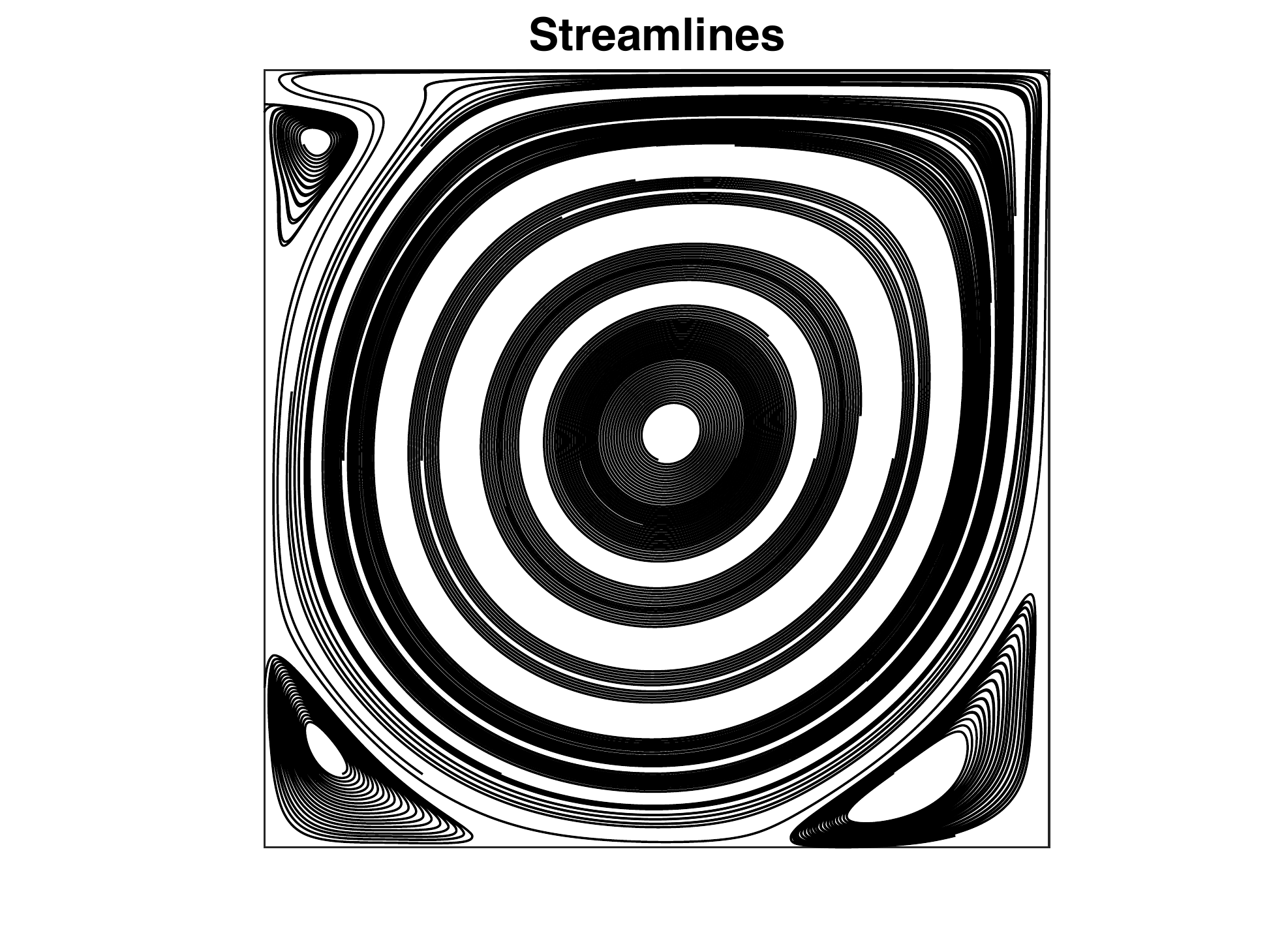}
	\caption{\label{2DCav2} Streamline plots of the solutions from 4 level Anderson accelerated Picard solvers at varying $Re$.}
	\end{center}
\end{figure}
The 2D driven cavity uses a domain $\Omega=(0,1)^2$, with no slip boundary conditions on the sides and bottom, and a `moving lid' on the top which is implemented by enforcing the Dirichlet boundary condition $u(x,1)=\langle 1,0 \rangle^T$.  There is no forcing ($f=0$), and the kinematic viscosity is set to be $\nu \coloneqq Re^{-1}$.  We discretize with $(P_2,P_1)$ Taylor-Hood elements on a $\frac{1}{64}$ mesh that provides 37,507 total degrees of freedom, and for the initial guess we used the Stokes solution on the same mesh and the same problem data. 
Plots of the velocity solutions from 4 level Anderson accelerated Picard solvers at 
$Re$=1000, 2500 and 5000 are shown in Figure \ref{2DCav2}, 
and these solutions match well those from recent literature \cite{bruneau:cavity}.

Convergence results for $Re=$1000, 2500, and 5000 are shown in Figure \ref{2DCav1}.  
In all cases, we observe an improvement from Anderson acceleration for the Picard method,
with an increase in improvement for higher Reynolds numbers.  
That is, while Anderson acceleration offers just a modest gain for $Re$=1000, 
for $Re$=2500 the gain is much greater, and for $Re$=5000, Picard appears to fail 
(or at least will take many, many iterations to converge to a reasonable tolerance).  
The Newton solver works very well for $Re$=1000, but fails for higher $Re$.  
We see the best Anderson performance in all cases with $m=4$, however 
the convergence behaviors with $m=3$ and $m=4$ are generally close, with $m=3$ more 
stable.
\begin{figure}[H]
\begin{center}
$Re$=1000 \hspace{1.25in} $Re$=2500 \hspace{1.45in} $Re=5000$ \\
\includegraphics[width = .32\textwidth, height=.3\textwidth,viewport=0 0 600 420, clip]{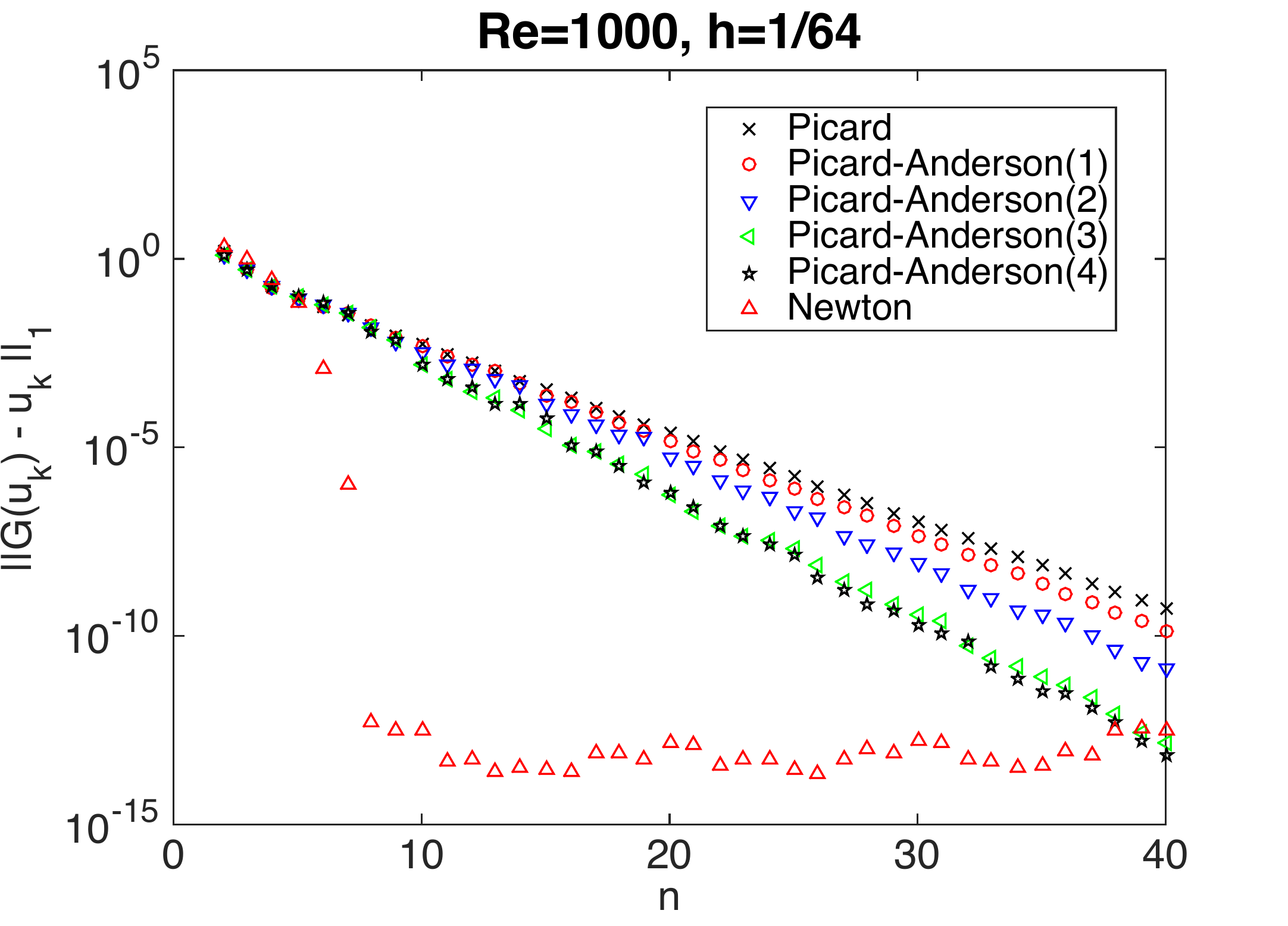}
\includegraphics[width = .32\textwidth, height=.3\textwidth,viewport=0 0 600 420, clip]{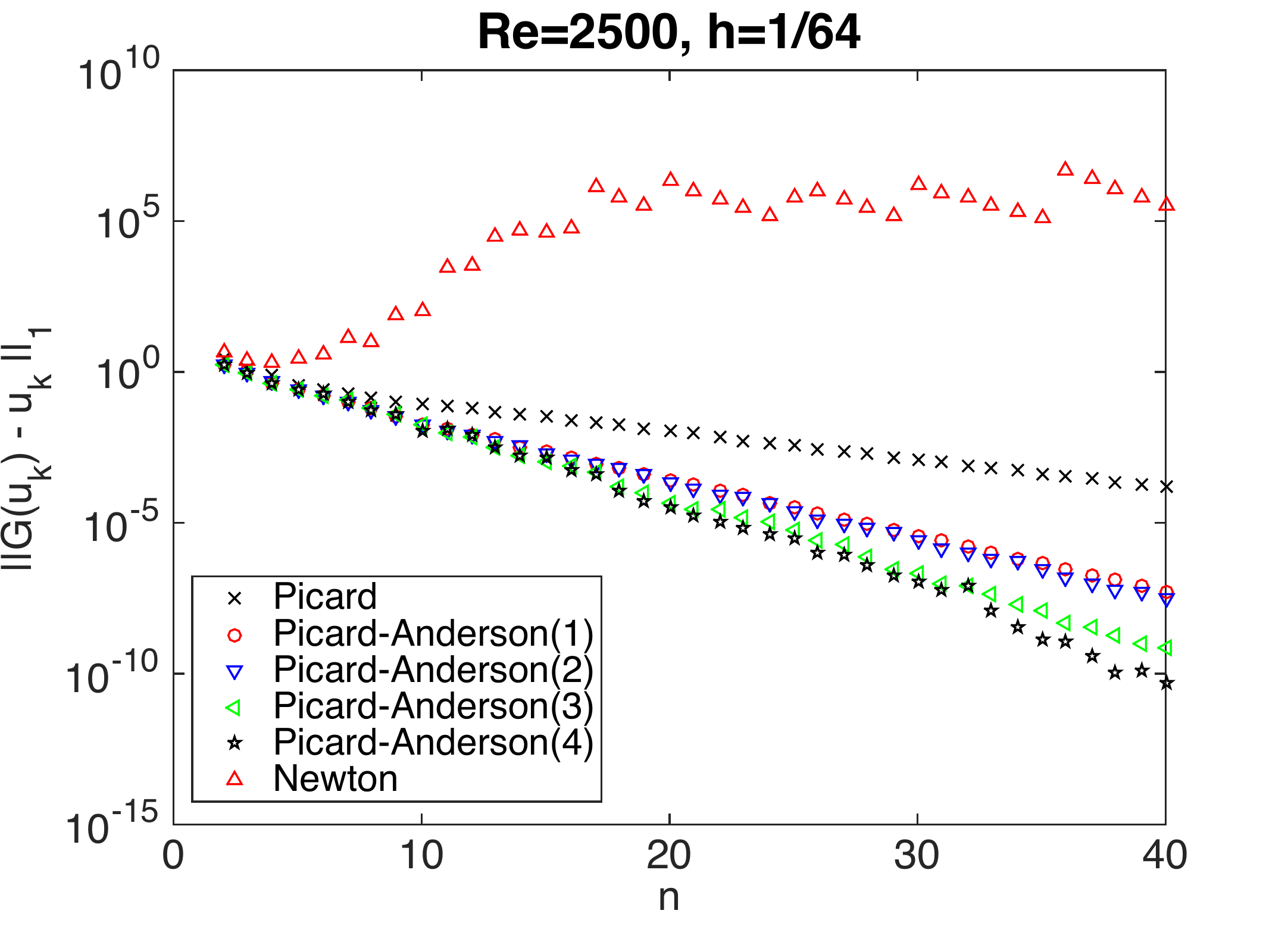}
\includegraphics[width = .32\textwidth, height=.3\textwidth,viewport=0 0 600 420, clip]{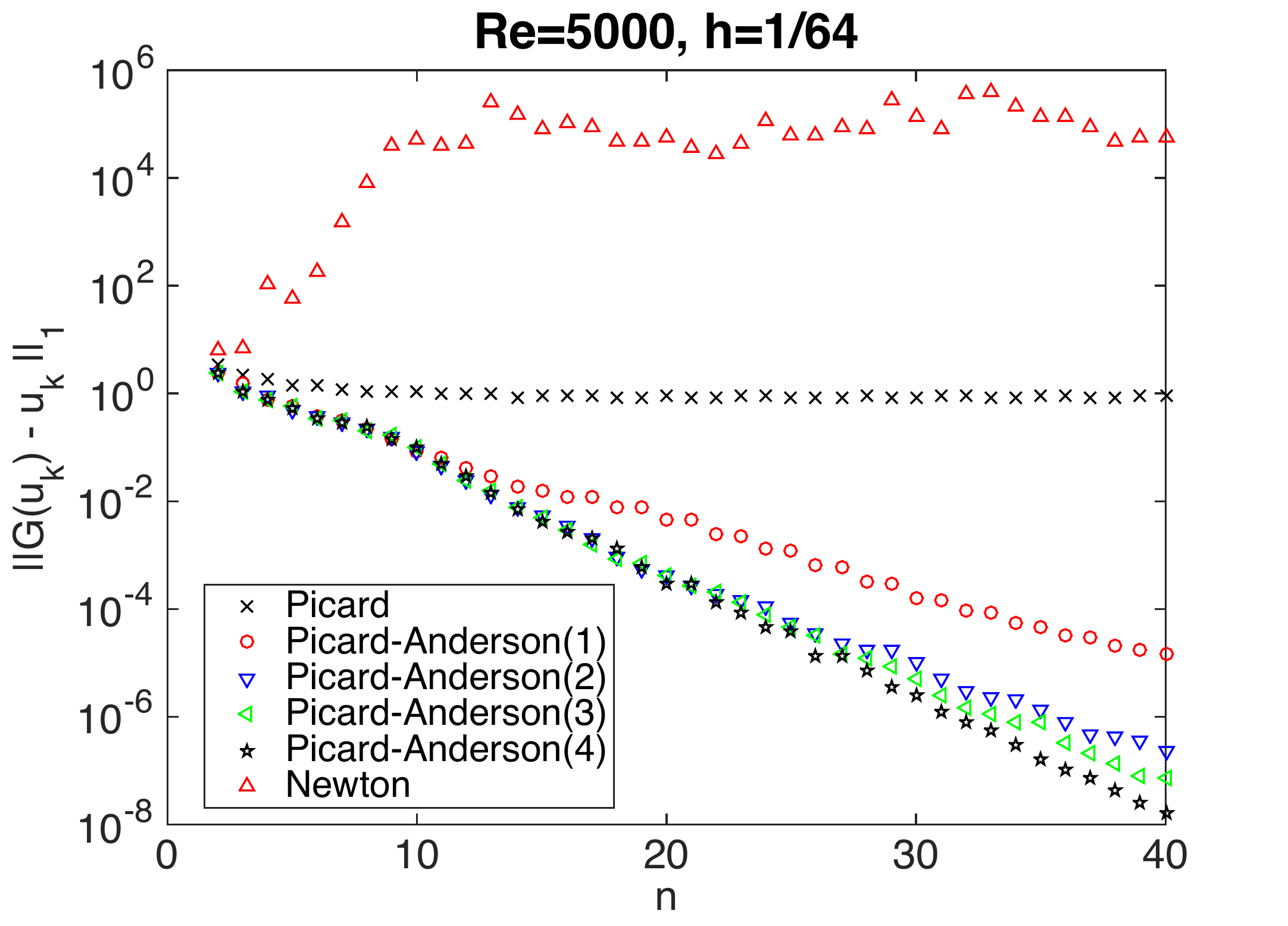}
	\caption{\label{2DCav1} Convergence of the various nonlinear solvers for the 2D Cavity test at varying $Re$.}
	\end{center}
\end{figure}
Figure \ref{fig:2DCav2-theta} shows the computed gain $\theta_k$ for
each optimization problem, for each value of $m$ and $Re$ investigated.  
Here we note the volatility in $\theta_k$ for the $m=4$ case, in agreement with the
instability in the convergence rate compared with $m=3$. In fact, we observe for each
Reynolds number at least one index $k$ for which $\theta_k > 1$ for $m=4$.
This suggests the source of the instability in the convergence rate is the 
failure of `fminsearch' to adequately solve the optimization problem for $m=4$.
Nonetheless, we generally see smaller values of $\theta_k$ (greater gain) with 
increasing $m$.  Notably, many of the $m=1$ values of $\theta_k$ are close to unity,
suggesting the importance of including search directions from earlier in the history.
\begin{figure}[H]
\begin{center}
$Re$=1000 \hspace{1.25in} $Re$=2500 \hspace{1.45in} $Re=5000$ \\
\includegraphics[width = .32\textwidth, height=.3\textwidth,viewport=0 0 600 420, clip]{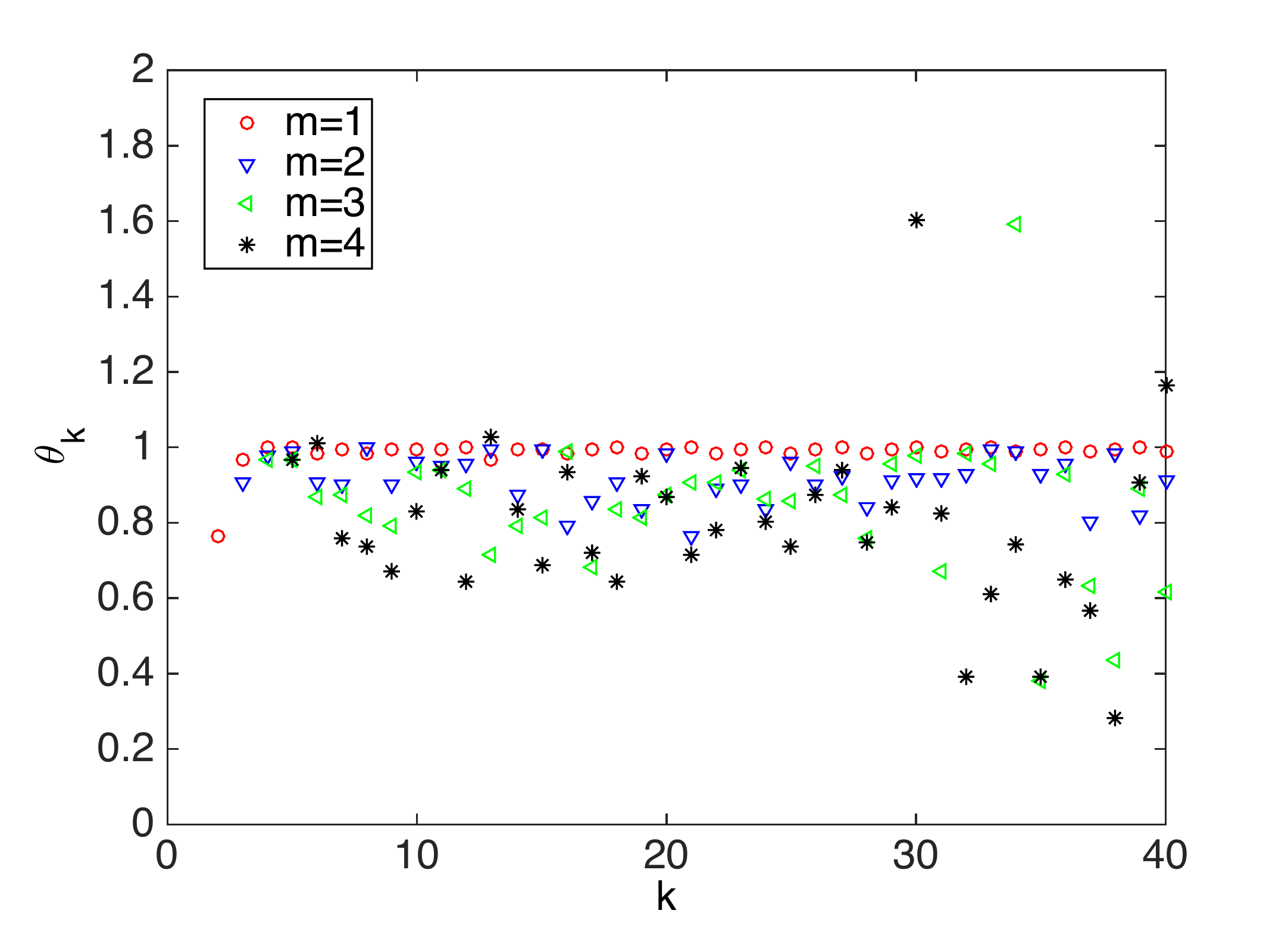}
\includegraphics[width = .32\textwidth, height=.3\textwidth,viewport=0 0 600 420, clip]{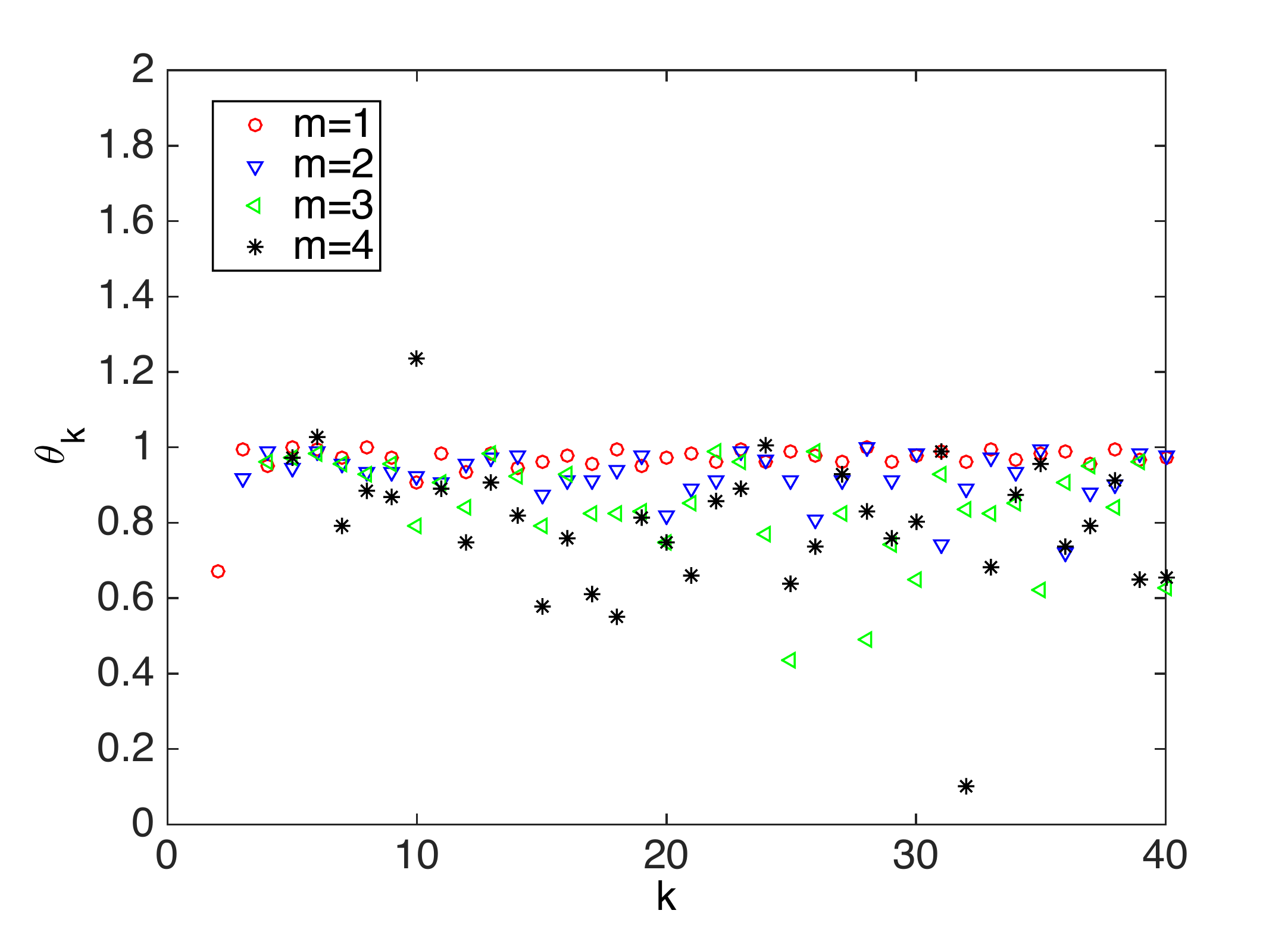}
\includegraphics[width = .32\textwidth, height=.3\textwidth,viewport=0 0 600 420, clip]{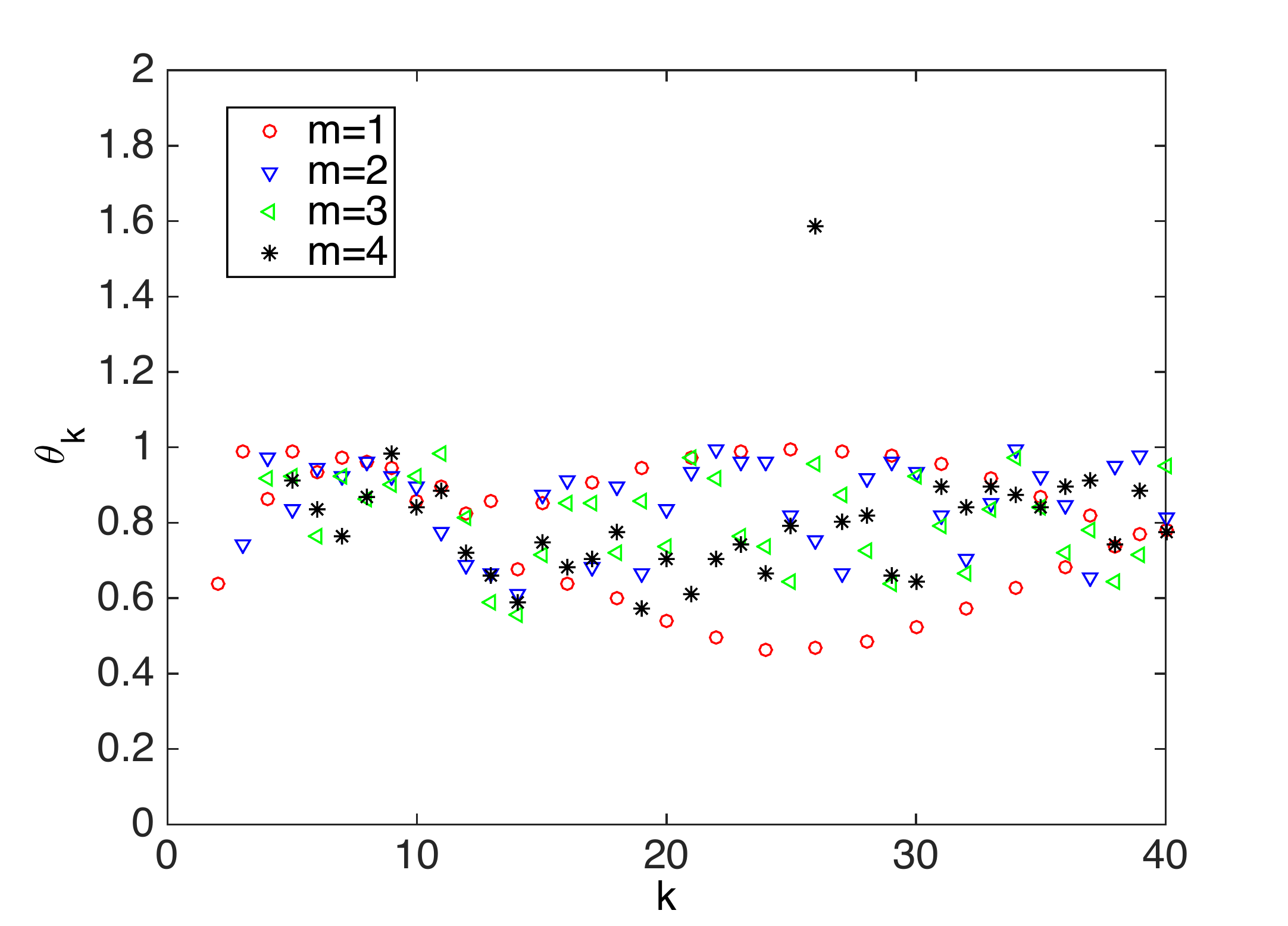}
\caption{\label{fig:2DCav2-theta} The $\theta_k$ vs. $k$, for varying $m$ for the 
  $Re$=2500 driven cavity simulation.}
\end{center}
\end{figure}
We compare our numerical results with the theoretical ones by comparing median values
of the gain $\theta_k$ and convergence rate $\kappa$  taken over all iterations $k$ 
for each $m$ and value of $Re$ investigated.  Table \ref{tpa01} shows the computed
$\theta_{med}$, and Table \ref{tpa02} compares the theoretical convergence rate
approximated to first order by $\kappa_{med}\theta_{med}$ to the computed mean 
convergence rate taken over all iterations.  We find the computed rates bounded 
below the theoretical ones, with a better prediction for lower values of $Re$.  
\begin{table}[H]
\begin{center}
 \begin{tabular}{c|ccc}
   & $Re$=1000 & $Re$=2500 & $Re$=5000 \\ \hline
 m & $\theta_{med}$ & $\theta_{med}$  & $\theta_{med}$   \\ \hline
 1 & 0.9936	& 0.9752  & 0.8503  \\
 2 & 0.9154 & 0.9282 &  0.8830 \\ 
 3 & 0.8719 & 0.8397 & 0.8164\\ 
 4 &  0.7902 & 0.7984 & 0.7738
\end{tabular}
\end{center}
\caption{\label{tpa01} Shown above are median values of $\theta_k$ for the 2D driven cavity simulations.
$\kappa_{med}$ is calculated from the Picard iteration above (without acceleration) to be 0.8040.  }
\end{table}

\begin{table}[H]
\begin{center}
 \begin{tabular}{c|cccccc}
   & $Re$=1000 & $Re$=1000 & $Re$=2500 & $Re$=2500 &$Re$=5000 & $Re$=5000  \\ \hline
 m & conv rate & $\theta_{med}^m \cdot 0.5848$ &  conv rate & $\theta_{med}^m \cdot 0.7951$  & conv rate &  $\theta_{med}^m \cdot 0.9696$   \\ \hline
0 & 0.5848 & -            &0.7951 & -       &0.9696 & - \\
 1 & 0.5471 & 0.5811 &    0.6423 & 0.7753 & 0.7270 & 0.8245 \\
 2 &0.5205  & 0.5353 &  0.6513  & 0.7380 & 0.6463  & 0.8562 \\ 
 3 & 0.4643 & 0.5099 &  0.5695 &  0.6676 &   0.6301 & 0.7916 \\ 
 4 &  0.4129 & 0.4621 &  0.5624 & 0.6348&  0.6121 &0.7503  \\
\end{tabular}
\end{center}
\caption{\label{tpa02} Shown above are median values of the convergence rates (median of successive difference ratios), and an estimate of the predicted rate of our theory, using the product of the median gain of the optimization $\theta_{med}^m$ with the median convergence rate of the Picard iteration, for varying $Re$ and $m$.}
\end{table}

\subsection{3D lid driven cavity}\label{subsec:3Dlid}

\begin{figure}[H]
\centering
\includegraphics[width = .89\textwidth, height=.28\textwidth,viewport=50 300 600 490, clip]{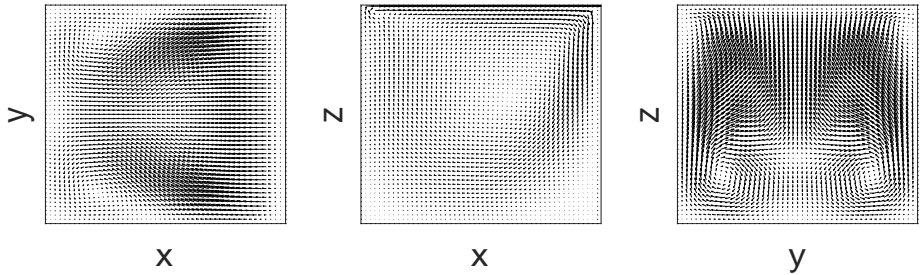}
\caption{\label{3dcavity2} Shown above are midsliceplane plots for the 3D driven cavity simulations at $Re$ = 400 using Picard-Anderson(4) method, these plots are well agreement with \cite{WB02}.}
\end{figure}

Next, we test AAPINSE on the 3D lid driven cavity problem. This problem is similar to the 2D case, and uses no slip boundary conditions on all walls, 
$u = \langle 1,0,0\rangle^T$ on the moving lid, no forcing, and set $\nu = \frac{1}{400}$.  We compute with $(P_3,P_2^{disc})$ Scott-Vogelius elements on a barycenter refined tetrahedral mesh that provides 796,722 total degrees of freedom.  We  tested the algorithm with different levels of optimization, all with initial guesses of zero in the interior but satisfying the boundary conditions.
Figure \ref{3dcavity2} shows a visualization of the computed solution
with $m=4$, which are in well agreement with \cite{WB02}.

\begin{figure}[H]
\centering
\includegraphics[width = .49\textwidth, height=.4\textwidth]{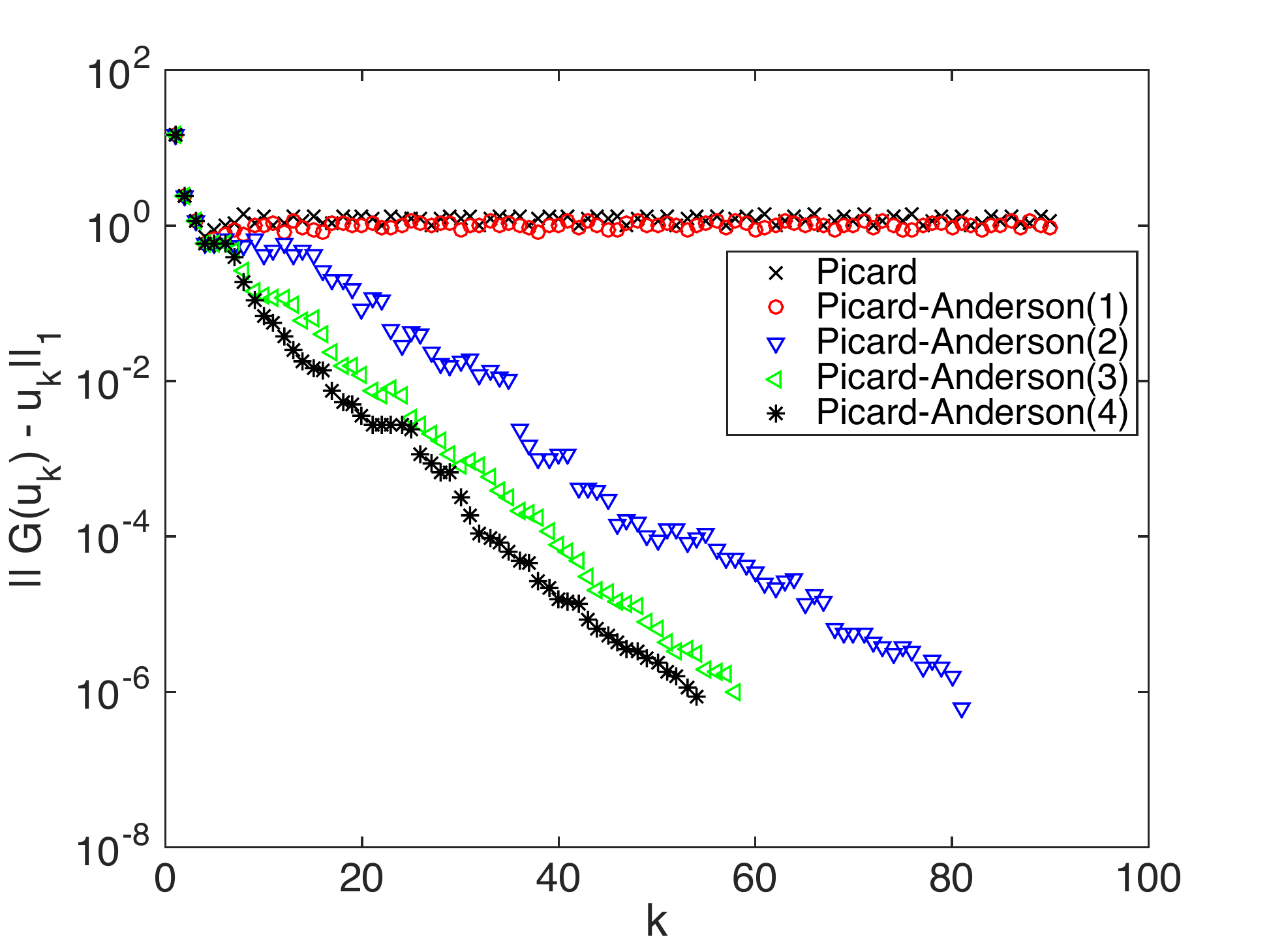}
\includegraphics[width = .49\textwidth, height=.4\textwidth]{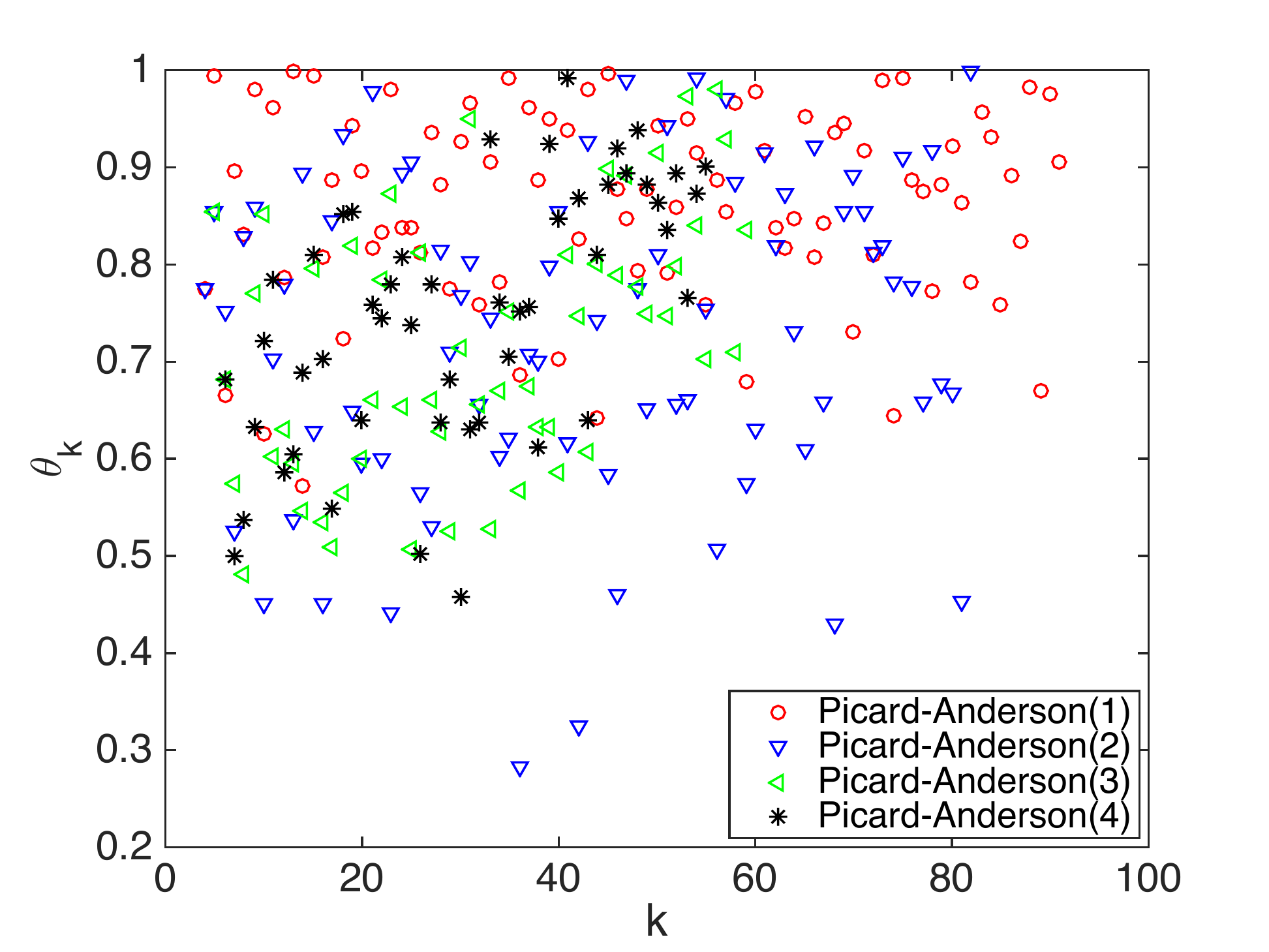}
\caption{\label{3dcavity} Convergence (left) and $\theta_k$ (right)  for AAPINSE for the 3D Cavity test at $Re= 400$.}
\end{figure}
Figure \ref{3dcavity} shows the convergence rate (left) and values of $\theta_k$ (right) for AAPINSE with varying $m$. From the convergence plot, we observe both Picard iteration and AAPINSE with $m=1$ fail, but a dramatic improvement is obtained using $m\ge 2$, sufficient to provide convergence. The $\theta_k$ plot shows the computed gain for each optimization problem and each value of $m$.  All $\theta_k$ values are below unity for this test, but for $m=1$ are closer to 1 than for larger $m$, however no gain is evident from the plot for increasing $m$ above 2.  This is also evident in Table \ref{tpa03}, which summarizes the computed median values of $\theta_k$ for the different $m$, and we observe lower values for $m\ge 2$, but no significant gain for choosing $m$ larger (in fact, $m=4$ gives slightly worse results than $m=3$, which we suspect is a result of `fminsearch' not exactly solving the optimization problem in this case). Table \ref{tpa04} compares the computed median convergence rate over all iterations to the theoretical convergence approximated by $\kappa_{med}\theta_{med}$.  
These results differ from the 2D case in that the computed rates are not bounded above
by the approximated theoretical rates (although for $m=4$ the values are close).  
This is expected given the convergence rate for the underlying fixed-point iteration
approximated by $\kappa_{med}=1.0215>1$ for this computation does not satisfy the 
small-data condition (the operator $G$ is not contractive).  
In particular, \eqref{eqn:lem2-e0} no longer implies the key estimates 
\eqref{eqn:ekalpha}-\eqref{eqn:ek1beta} in the $m=2$ case; 
and similarly \eqref{eqn:c2e3} does not imply \eqref{eqn:crit2} for the $m=1$ analysis.
\begin{table}[H]
\begin{center}
 \begin{tabular}{c|c}
   & $Re$=400  \\ \hline
 m & $\theta_{med}$    \\ \hline
 1 & 0.8612	 \\
 2 & 0.7281  \\ 
 3 & 0.7185 \\ 
 4 &  0.7508 
\end{tabular}
\end{center}
\caption{\label{tpa03}Shown above are median values of $\theta_k$ for the 3D driven cavity simulations. $\kappa_k$ is calculated from the Picard iteration above (without acceleration) to be 1.0215.}
\end{table}

\begin{table}[H]
\begin{center}
\begin{tabular}{c|cc}
m& conv rate &  $\theta_{med}^m \cdot 1.0215$ \\ \hline
0 & 1.0215 & - \\ 
1&0.9936 & 0.8797\\
2& 0.8623 & 0.7438 \\
3& 0.7967& 0.7340 \\
4& 0.7736 & 0.7670
\end{tabular}
\end{center}
\caption{\label{tpa04} Shown above are median values of the convergence rates (median of successive difference
ratios), and an estimate of the predicted rate of our theory, using the product of the median gain
of the optimization $\theta^m_{med}$ with the median convergence rate of the Picard iteration for varying $m$.}
\end{table}

\section{Conclusions}\label{sec:conc}
In this paper, we showed that Anderson acceleration applied to the Picard iteration can 
provide a significant, and sometimes dramatic, improvement in convergence behavior.  
We proved this analytically, and to our knowledge this is the first proof of Anderson 
acceleration providing (essentially) guaranteed improved convergence for a fixed point 
iteration, and in particular for a nonlinear fluid system.  
We also give results of several numerical tests that show the gains provided 
by Anderson acceleration for this problem can even be an enabling technology in the 
sense that it allows for convergence when both the Picard and Newton iterations fail.
The presented theory is based on characterizing the improvement in the
fixed-point convergence rate by the gain from the optimization problem.
While our numerics show the theoretical results somewhat underpredict the 
effectiveness of the acceleration strategy, they appear to capture the highest order 
effects.

Important future work includes extending these ideas to the recently proposed IPY variant of the Picard iteration for the steady NSE\cite{RVX17}, which has similar convergence properties of Picard but has linear systems that are much easier to solve. We also plan to explore whether Anderson acceleration be used to aid in the convergence of Newton iterations for steady NSE, since Newton tends to fail for higher $Re$.  Applying Anderson acceleration to steady multiphysics problems such as MHD may also be a fruitful pursuit.


\end{document}